\def\subsubsection{\@startsection{subsubsection}{3}%
  \z@{.5\linespacing\@plus.7\linespacing}{-.5em}%
  {\normalfont\bfseries}}
\theoremstyle{plain} 
\newtheorem{thm}{Theorem}[section] 
\newtheorem{mthm}{Main Theorem}[]
\newtheorem{cor}[thm]{Corollary} 
\newtheorem{lem}[thm]{Lemma} 
\newtheorem{prop}[thm]{Proposition}
\theoremstyle{definition} 
\newtheorem{defn}{Definition}[] 
\newtheorem{es}{Example}
\theoremstyle{remark} 
\newtheorem{oss}{Remark}
\newcommand{\xhooklongrightarrow}[2][]{%
  \ext@arrow3399{\hooklongrightarrowfill@}{#1}{#2}} 
 \DeclareMathOperator{\Ass}{Ass}
 \DeclareMathOperator{\hgt}{ht}
  \DeclareMathOperator{\car}{char}
    \DeclareMathOperator{\Tr}{Tr}
   \DeclareMathOperator{\lt}{in_\prec}
   \DeclareMathOperator{\init}{in}
 \DeclareMathOperator{\Min}{Min}
\title{Knutson ideals and determinantal ideals of Hankel matrices}
\author{Lisa Seccia}
\address{Universit\`a  di Genova,  Dipartimento di Matematica. 
 Via Dodecaneso 35, 16146 Genova, Italy}
\email{seccia@dima.unige.it}
 \date{}
\begin{document}
 \maketitle
 \begin{abstract}
 Motivated by a work of Knutson, in a recent paper Conca and Varbaro have defined a new class of ideals, namely “Knutson ideals”, starting from a polynomial $f$ with squarefree leading term. We will show that the main properties that this class has in polynomial rings over fields of characteristic $p$ are preserved when one introduces the definition of Knutson ideal also in polynomial rings over fields of characteristic zero. Then we will show that determinantal ideals of Hankel matrices are Knutson ideals for a suitable choice of the polynomial $f$.
 \end{abstract}
 \section{Introduction}
 Let $\mathbb{K}$ be a field of any characteristic. Fix $f \in S= \mathbb{K}[x_1,\ldots,x_n]$ a polynomial such that its leading term $\lt (f)$ is a squarefree monomial  for some term order $\prec$. We can define a new family of ideals starting from the principal ideal $(f)$ and taking associated primes, intersections and sums. Geometrically  this means that we start from the hypersurface defined by $f$ and we construct a family of new subvarieties $\{Y_i\}_i$ by taking irreducible components, intersections and unions.\par
In \cite{CV}, Conca and Varbaro called this class of ideals \emph{Knutson ideals}, since they were first studied by Knutson in \cite{Kn}.
 \begin{defn}[\textbf{Knutson ideals}] \label{K.I.} Let $f \in S= \mathbb{K}[x_1,\ldots,x_n]$ be a polynomial such that its leading term $\lt (f)$ is a squarefree monomial  for some term order $\prec$ .
Define $\mathcal{C}_f$ to be the smallest set of ideals satisfying the following conditions:
\begin{enumerate}
\item[1.] $(f) \in \mathcal{C}_f$;
\item[2.]  If $I \in \mathcal{C}_f$ then $I:J \in \mathcal{C}_f$ for every ideal $J \subseteq S$;
\item[3.] If $I$ and $J$ are in $\mathcal{C}_f$ then also $I+J$ and $I \cap J$ must be in $\mathcal{C}_f$.
\end{enumerate} 
If $I$ is an ideal in $\mathcal{C}_f$, we say that $I$ is a \emph{Knutson ideal associated to} $f$. More generally, we say that $I$ is a \emph{Knutson ideal} if $I \in \mathcal{C}_f$ for some $f$.
 \end{defn}
 
 In \cite{Kn} Knutson proved that if $\mathbb{K}=\mathbb{Z}/p\mathbb{Z}$,  this class of ideals has some interesting properties.
 
 \begin{thm} \label{kn-fp}
Let $S=\mathbb{Z}/p\mathbb{Z}[x_1, \ldots,x_n]$ and let $f$ be a polynomial in $S$ such that $\init_{\prec}(f)= \prod_{i} x_i$ with respect to some term order.
\begin{enumerate}[label=(\roman*)]
\item \cite[Theorem 4]{Kn} If $I \in \mathcal{C}_f$, then $\init \left(I\right)$ is squarefree. In particular, $I$ is radical. \label{kn-fp1}
\item \cite[Corollary 2]{Kn} If $I, J \in \mathcal{C}_f$ then $\mathcal{G}_{I+J}=\mathcal{G}_I \cup \mathcal{G}_J$
where $\mathcal{G}_I$ (respectively $\mathcal{G}_J$) is a Gr\"obner basis of $I$ (respectively $J$). 
\end{enumerate} 
\end{thm}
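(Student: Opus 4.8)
The plan is to invoke the theory of Frobenius splittings, which is the original source of both statements. After rescaling, assume $\init_\prec(f)=x_1\cdots x_n$ with coefficient $1$, and set $g:=f^{p-1}$, so that $\init_\prec(g)=x_1^{p-1}\cdots x_n^{p-1}$, again with coefficient $1$. Recall that $S=\mathbb{Z}/p\mathbb{Z}[x_1,\dots,x_n]$ is free over $S^{p}$ with basis the monomials $x^{a}$, $0\le a_i\le p-1$, and that every $p^{-1}$-linear map $S\to S$ equals $\Tr(h\cdot{-})$ for a unique $h\in S$, where $\Tr$ is the ``trace'' determined by $\Tr\big(x_1^{p-1}\cdots x_n^{p-1}\cdot s^{p}\big)=s$ and $\Tr(x^{a}\cdot s^{p})=0$ for $a\ne(p-1,\dots,p-1)$ in that range. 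Put $\sigma:=\Tr(g\cdot{-})$. Then $\sigma(1)=\Tr(g)=1$: every monomial occurring in $g$ is $\preceq x_1^{p-1}\cdots x_n^{p-1}$, whereas an exponent vector $\equiv(p-1,\dots,p-1)\pmod{p}$ is $\ge(p-1,\dots,p-1)$ componentwise and hence has monomial $\succeq x_1^{p-1}\cdots x_n^{p-1}$, so the only contribution to $\Tr(g)$ is the leading term. Thus $\sigma$ is a Frobenius splitting of $S$, and it compatibly splits $(f)$, since $\sigma(fu)=\Tr(f^{p}u)=f\,\Tr(u)\in(f)$ for all $u\in S$.

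Next I would record the closure properties of the collection of compatibly $\sigma$-split ideals. (a) Such an ideal $I$ is radical: if $a^{p^{e}}\in I$, then the iterate $\sigma^{e}$ is again a splitting with $\sigma^{e}(a^{p^{e}})=a$ and $\sigma^{e}(I)\subseteq I$, so $a\in I$. (b) Since $\sigma$ is additive, $\sigma(I+J)\subseteq I+J$ and $\sigma(I\cap J)\subseteq\sigma(I)\cap\sigma(J)\subseteq I\cap J$, so sums and intersections of compatibly split ideals are compatibly split. (c) The minimal primes of a compatibly split ideal are again compatibly split (a standard fact about Frobenius splittings); combined with (a), for any $J$ the ideal $I:J=\bigcap_{P\in\Min(I),\,P\not\supseteq J}P$ is an intersection of compatibly split primes, hence compatibly split. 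By induction on the construction of $\mathcal{C}_f$ it follows that every $I\in\mathcal{C}_f$ is compatibly $\sigma$-split, and therefore radical; this already yields the second assertion of \ref{kn-fp1}.

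To complete \ref{kn-fp1} I would prove that $\init_\prec(I)$ is squarefree; being a monomial ideal, it suffices that it be radical, and for that I would degenerate the splitting itself. Choose $w\in\mathbb{N}^{n}$ selecting $\prec$ simultaneously on a generating set of $I$ and on $f$, so that $\init_w(I)=\init_\prec(I)$ and $\init_w(f)=x_1\cdots x_n$. The one-parameter subgroup $t\mapsto\operatorname{diag}(t^{w_1},\dots,t^{w_n})$ gives a flat degeneration of $I$ to $\init_\prec(I)$, under which the corresponding conjugates of $\sigma$, namely $\Tr(f_t^{\,p-1}\cdot{-})$ with $f_t$ the rescaled polynomial (degenerating to $x_1\cdots x_n$), degenerate to the coordinate splitting $\sigma_0:=\Tr\big(x_1^{p-1}\cdots x_n^{p-1}\cdot{-}\big)$. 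Since compatibility of a splitting with an ideal is a Zariski-closed condition, in the limit $\sigma_0$ compatibly splits $\init_\prec(I)$; applying (a) with $\sigma_0$ in place of $\sigma$ shows $\init_\prec(I)$ is radical, hence squarefree. (Alternatively one runs the whole argument on a Rees-type family over $\mathbb{A}^{1}$ interpolating between $I$ and $\init_\prec(I)$, carrying $f$ along.)

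For \ref{kn-fp2}, note first that $\mathcal{G}_I\cup\mathcal{G}_J$ generates $I+J$, so the statement is exactly that this set is a Gr\"obner basis of $I+J$, equivalently that $\init_\prec(I+J)=\init_\prec(I)+\init_\prec(J)$, the inclusion $\supseteq$ being automatic. By \ref{kn-fp1} all of $\init_\prec(I)$, $\init_\prec(J)$, $\init_\prec(I+J)$ are squarefree, so under the Stanley--Reisner correspondence the left-hand side corresponds to a subcomplex $\Gamma$ of $\Delta(\init_\prec I)\cap\Delta(\init_\prec J)$, and one must show $\Gamma$ is the whole intersection; since $\Gamma$ is a subcomplex, it is enough that the two complexes have equal $f$-vectors, that is --- using flatness of the Gr\"obner degenerations --- that $S/(I+J)$ and $S/(\init_\prec I+\init_\prec J)$ have the same Hilbert function. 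This last equality is the crux and, as the general failure of $\init(I+J)=\init(I)+\init(J)$ shows, cannot come formally from \ref{kn-fp1}; I expect it to require the compatible splitting essentially, via Knutson's analysis of the degeneration showing that $I\mapsto\init_\prec(I)$, restricted to compatibly $\sigma$-split ideals, is a lattice homomorphism onto squarefree monomial ideals --- equivalently, that the coordinate degeneration of $V(I)\cap V(J)$ equals the intersection of the coordinate degenerations of $V(I)$ and $V(J)$, with no excess. Establishing that flatness statement for the intersection in the degenerate limit (essentially the content of \cite{Kn}) is the main obstacle; everything else reduces to standard Frobenius-splitting facts once $\sigma$ is in hand.
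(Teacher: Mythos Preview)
The paper does not prove this theorem; it is quoted from Knutson's paper \cite{Kn} (parts (i) and (ii) are his Theorem~4 and Corollary~2). Your approach to (i) via the Frobenius splitting $\sigma=\Tr(f^{p-1}\cdot{-})$ and degeneration to the coordinate splitting is essentially Knutson's original argument, and the outline is correct (indeed Remark~\ref{perfect} of the paper recalls that \cite[Theorem~4]{Kn} is proved exactly along these lines, via \cite[Lemma~2, Lemma~5, Theorem~2]{Kn}).

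For (ii), however, you leave an explicit gap (``the main obstacle''), believing that further splitting-theoretic input is needed to control the degenerated intersection. In fact (ii) follows from (i) by a short elementary argument that the paper sketches in Remark~\ref{finite} and the paragraph after it. One always has
\[
\lt(I\cap J)\ \subseteq\ \lt(I)\cap\lt(J)\ \subseteq\ \sqrt{\lt(I\cap J)},
\]
the second inclusion because if a monomial $m$ equals $\lt(a)=\lt(b)$ with $a\in I$, $b\in J$, then $ab\in I\cap J$ and $m^{2}=\lt(ab)\in\lt(I\cap J)$. Since $I\cap J\in\mathcal{C}_f$, part (i) makes $\lt(I\cap J)$ squarefree, hence radical, and the chain collapses to $\lt(I\cap J)=\lt(I)\cap\lt(J)$. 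For homogeneous $I,J$ the short exact sequence
\[
0\longrightarrow S/(I\cap J)\longrightarrow S/I\oplus S/J\longrightarrow S/(I+J)\longrightarrow 0
\]
together with invariance of Hilbert series under Gr\"obner degeneration then forces $\lt(I+J)=\lt(I)+\lt(J)$, which is precisely the statement that $\mathcal{G}_I\cup\mathcal{G}_J$ is a Gr\"obner basis of $I+J$. (The paper notes that the non-homogeneous case needs a bit more work but is not essentially different.) So no further appeal to compatible splittings or to flatness of the limit intersection is required: the ``obstacle'' you flag dissolves once (i) is in hand, and your Stanley--Reisner detour is unnecessary.
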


\begin{oss}\label{finite}
Note that if $C$ is a family of ideals closed under intersections and such that $\lt (I)$ is squarefree for every $I \in C$, then $C$ is a finite set. In fact, it is easy to check that
$$\lt (I \cap J) \subseteq \lt (I) \cap \lt (J) \subseteq \sqrt{\lt(I \cap J)}.$$
Since $C$ is closed under intersections, $I\cap J \in C$ and therefore $ \lt (I \cap J)=\sqrt{\lt(I \cap J)}$. So, from the previous chain of subsets, we get $$\lt (I \cap J) = \lt (I) \cap \lt (J). $$ More generally, this holds for every finite intersection: $$\lt( \bigcap \limits_i I_i)=\bigcap \limits_i \lt (I_i).$$ We claim that if $I,J \in C$ and $I \neq J$, then $\lt (I) \neq \lt(J)$ and since $\lt (I)$ is squarefree for every $I \in C$, these initial ideals are a finite number. Hence $C$ is finite.
To prove the claim, assume that $\lt (I)=\lt(J)$. Then $$\lt(I\cap J)=\lt (I) \cap\lt(J)=\lt(I)=\lt(J).$$
Considering that $I \cap J \subseteq I,J$, we get $I=I\cap J=J$. This completes the proof of the claim.
\end{oss}

From Theorem \ref{kn-fp}.\ref{kn-fp1} and Remark \ref{finite}, one can infer that $\mathcal{C}_f$ is finite.\\


\begin{oss}
 Actually, assuming that every ideal of $\mathcal{C}_f$ is radical, the second condition in Definition \ref{K.I.} can be replaced by the following:
 \begin{itemize}
 \item[$2^\prime .$]If $I \in \mathcal{C}_f$ then $\mathcal{P} \in \mathcal{C}_f$ for every $\mathcal{P} \in \Min(I)$.
 \end{itemize}
 In fact, let $I\in \mathcal{C}_f$, then
 $$I= \sqrt{I}=P_1 \cap P_2 \cap \ldots\cap P_r$$
 where $P_i$ are the minimal primes of $I$. Fix $c \in (P_2 \cap \ldots\cap P_r) \setminus P_1$. Clearly $P_1 \subseteq (I:c) \subseteq P_1$, hence $P_1= (I:c)$. The same holds for every $P_i$. Viceversa, it is easy to observe that if $I$ is radical, then the minimal primes of $I:J$ are exactly the minimal primes of $I$ that do not contain $J$.
 
 \end{oss}

In this paper we begin the study of this class of ideals whose properties allow us to prove interesting results on radicality and $F$-purity of certain ideals.\par
In Section 2, we introduce the definition of Knutson ideals in polynomial rings over any field and we show that the properties listed in the previous discussion stay unchanged. In particular, we start by proving the following:
\begin{mthm} Let $S=\mathbb{K}[x_1, \ldots,x_n]$ be a  polynomial ring over any field and let $f$ be a polynomial in $S$ such that $\init_{\prec}(f)= \prod_{i} x_i$ with respect to some term order. If $I \in \mathcal{C}_f$, then $\init_{\prec}\left(I\right)$ is squarefree. In particular, $I$ is radical.
\end{mthm}
 To do so, we first generalize Knutson's result \cite[Theorem 4] {Kn} to fields of positive characteristic (see Proposition \ref{pp1}) and then we use the achieved result together with reduction modulo $p$ to prove that the same holds for polynomial rings over fields of characteristic $0$ (see Proposition \ref{car0}). Once we have proved these results, the finitness of the family $\mathcal{C}_f$ can be inferred again from Remark \ref{finite}, while the last property about Gr\"obner bases can be deduced  by Remark \ref{finite} using the fact that $$\lt(I\cap J)=\lt(I) \cap \lt(J) \Leftrightarrow \lt(I+ J)=\lt(I) + \lt(J).$$ 
In the case of homogeneous ideals, the latter equivalence comes from the usual short exact sequence
$$0 \longrightarrow S/(I \cap J) \longrightarrow S/I \oplus S/J \longrightarrow S/(I+J)\longrightarrow 0$$
using the fact that the Hilbert function does not change when passing to the inital ideal. If $I$ and $J$ are not homogeneous, the equivalence is still true but the proof requires more work.\par
In Section 3, we discuss the case of determinantal ideals of generic Hankel matrices and we prove that they are Knutson ideals for a suitable choice of $f$ (see Theorem \ref{prophank} and Theorem \ref{prophank2}):
\begin{mthm} Let $H$ be a generic Hankel matrix of size $r\times s$. Then $I_t(H)$ is a Knutson ideal for every $t \leq \min (r,s)$.
\end{mthm}

 In particular, this implies that the determinantal ring of a generic Hankel matrix is $F$-pure (see Corollary \ref{corhank}), a result recently proved by different methods in \cite{CMSV}.\par
Furthermore, we characterize all the ideals belonging to the family for this choice of $f$ (see Theorem \ref{thm: car}).\\

\textbf{Acknowledgements.} I am deeply grateful to my advisor, Matteo Varbaro, for suggesting me this problem and for several helpful discussions.  I would also like to thank the anonymous referee for valuable suggestions.

\section{Knutson ideals in any characteristic}
The aim of this section is to try to generalize Knutson's results first to fields of characteristic $p>0$ (not necessarily finite) and then to fields of characteristic $0$.\\

\subsection{Fields of characteristic $p>0$}
Let $\mathbb{K}$ be a field of characteristic $p>0$ and let $f \in S=\mathbb{K}[x_1,\ldots,x_n]$ be a polynomial such that $\lt (f)$ is squarefree for some term order $\prec$. As in the case of $\mathbb{K}=\mathbb{Z}/p\mathbb{Z}$, we can construct the family $\mathcal{C}_f$ as the smallest set of ideals such that:
\begin{itemize}
\item[•] $(f) \in \mathcal{C}_f$
\item[•] $I \in \mathcal{C}_f, \quad J \subseteq S \Rightarrow I:J \in \mathcal{C}_f$
\item[•] $I, J \in \mathcal{C}_f \Rightarrow I+J, \quad I \cap J \in \mathcal{C}_f$.
\end{itemize}
We want to prove the following result.
\begin{prop}\label{pp1} Let $\mathbb{K}$ be  a field of characteristic $p>0$ and let $f$ be a polynomial in $ S=\mathbb{K}[x_1,\ldots,x_n]$ such that $\lt (f)$ is squarefree for some term order $\prec$.
If $I \in \mathcal{C}_f$ then $\lt(I)$ is squarefree.
\end{prop}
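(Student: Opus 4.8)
The plan is to reduce the statement for an arbitrary field $\mathbb{K}$ of characteristic $p>0$ to Knutson's original result over $\mathbb{Z}/p\mathbb{Z}$ (Theorem \ref{kn-fp}\ref{kn-fp1}), by a base-change (faithfully flat descent) argument. First I would observe that all the operations used to build $\mathcal{C}_f$ — forming the principal ideal $(f)$, taking colon ideals $I:J$, sums $I+J$, and intersections $I\cap J$ — are compatible with flat base change. So if $\mathbb{K}_0\subseteq \mathbb{K}$ is a subfield over which $f$ is defined (e.g. the prime field $\mathbb{F}_p$ together with the finitely many coefficients of $f$, which is actually a finitely generated extension of $\mathbb{F}_p$), then every ideal $I\in\mathcal{C}_f\subseteq S$ should arise as $I = I_0\otimes_{\mathbb{K}_0}\mathbb{K}$ for a corresponding ideal $I_0\in\mathcal{C}_{f}$ in $S_0=\mathbb{K}_0[x_1,\dots,x_n]$. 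The key point is that the term order $\prec$ and the leading term $\lt(f)=\prod_i x_i$ do not depend on the field, so $\mathcal{C}_f$ and $\mathcal{C}_{f,S_0}$ are built by the same recipe in parallel.

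The technical core is the interaction of initial ideals with base change: for a flat ring extension $S_0\to S$ that is "term-order compatible", one has $\lt(I_0\otimes_{\mathbb{K}_0}\mathbb{K}) = \lt(I_0)\otimes_{\mathbb{K}_0}\mathbb{K}$, i.e. a Gröbner basis of $I_0$ over $\mathbb{K}_0$ remains one after extending scalars (this is standard, since Buchberger's algorithm only involves field arithmetic with the coefficients of a fixed finite generating set). Consequently $\lt(I)$ is squarefree if and only if $\lt(I_0)$ is squarefree. So it suffices to prove the proposition when $\mathbb{K}=\mathbb{K}_0$ is finitely generated over $\mathbb{F}_p$.

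At that stage I would want to descend further to $\mathbb{K}_0=\mathbb{F}_p$ itself — but $f$ genuinely may have coefficients in a transcendental or algebraic extension, so one cannot literally do this. Instead, the cleanest route is: for $\mathbb{K}_0$ finitely generated over $\mathbb{F}_p$, pass to a place / specialization. Concretely, one can specialize the generators of $\mathbb{K}_0$ over $\mathbb{F}_p$ to elements of a large finite field $\mathbb{F}_q$ (after inverting finitely many denominators, or more carefully by choosing the specialization generically so that the Gröbner bases of the finitely many ideals in the finite family $\mathcal{C}_f$ all specialize without drop — here is where finiteness of $\mathcal{C}_f$, via Remark \ref{finite}, together with the squarefree-ness we are trying to establish, must be bootstrapped carefully, perhaps by an induction on the construction of $\mathcal{C}_f$). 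Over $\mathbb{F}_q$ Knutson's Theorem \ref{kn-fp}\ref{kn-fp1} applies directly (it is stated for $\mathbb{Z}/p\mathbb{Z}$, but the same proof, or a further base change $\mathbb{F}_p\to\mathbb{F}_q$, handles any finite field), giving squarefreeness of the specialized initial ideals, and then lifting back shows $\lt(I_0)$ is squarefree.

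The main obstacle I anticipate is precisely this descent to a finite field: one must ensure the specialization is chosen so that none of the (finitely many) relevant Gröbner basis computations degenerates — i.e. no leading coefficient vanishes under the specialization and the set of leading monomials is preserved simultaneously for the whole family $\mathcal{C}_f$. Handling this cleanly likely requires either an induction along the inductive definition of $\mathcal{C}_f$ (showing at each step that the new ideal is still "defined over a good specialization" and has squarefree initial ideal) or an abstract semicontinuity argument for initial ideals in families. A slicker alternative, avoiding finite fields altogether, would be to re-run Knutson's original proof of Theorem \ref{kn-fp}\ref{kn-fp1} verbatim and simply check that the only property of $\mathbb{Z}/p\mathbb{Z}$ it uses is $\car\mathbb{K}=p$ (via the Frobenius and the fact that $\prod_i x_i$ is squarefree so the hypersurface $V(f)$ is, roughly, compatibly split); this is probably the route the paper actually takes, and I would fall back on it if the descent bookkeeping becomes unwieldy.
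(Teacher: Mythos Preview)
Your instinct to argue by base change is exactly right, but you are going in the harder direction. The paper base-changes \emph{upward} to the algebraic closure $\overline{\mathbb{K}}$, not downward toward a finite field. The point is that $\overline{\mathbb{K}}$ is perfect, and (Remark~\ref{perfect}) Knutson's proof of Theorem~\ref{kn-fp}\ref{kn-fp1} goes through verbatim over any perfect field of characteristic $p$: the only feature of $\mathbb{F}_p$ actually used is the existence of $p$-th roots. Since $S\hookrightarrow\overline{S}=\overline{\mathbb{K}}[x_1,\dots,x_n]$ is flat, the operations $+$, $\cap$, and $(-:J)$ all commute with the extension (Propositions~\ref{Fact 1} and~\ref{Fact 2}), so $I\in\mathcal{C}_f$ forces $I\overline{S}\in\mathcal{C}_{\bar f}$; and because Buchberger's algorithm is stable under field extension, $\lt(I\overline{S})=\lt(I)\overline{S}$, so squarefreeness of $\lt(I)$ follows at once. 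No specialization, no tracking which subfield each ideal lives over, no circular appeal to finiteness of $\mathcal{C}_f$.

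Your descent-then-specialize route can in principle be made to work for a \emph{fixed} $I\in\mathcal{C}_f$ (carry $f$ and the finitely many auxiliary $J$'s used in building $I$ down to a finitely generated $\mathbb{K}_0/\mathbb{F}_p$, then specialize generically to some $\mathbb{F}_q$), but you have correctly identified the real obstacle: the specialization $\mathbb{K}_0\to\mathbb{F}_q$ is not flat, so commutation of $\cap$ and $:$ with it is not automatic and needs a separate semicontinuity argument for each step of the construction. More importantly, your ``slicker alternative'' --- that Knutson's argument only needs $\car\mathbb{K}=p$ --- is not quite right: the Frobenius-splitting machinery (in particular the trace map underlying $\Tr(f^{p-1}\bullet)$, cf.\ Knutson's Lemma~2) requires $p$-th roots of scalars, i.e.\ perfectness. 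That is precisely why the paper passes to $\overline{\mathbb{K}}$ rather than rereading Knutson's proof over $\mathbb{K}$ itself.
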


A first step toward this result is given by the following observation.

\begin{oss}\label{perfect}
If $\mathbb{K}$ is a perfect field of characteristic $p>0$ then it is easy to generalize Theorem \ref{kn-fp}.\ref{kn-fp1}. In fact \cite[Theorem 4]{Kn} is a consequence of \cite[Theorem 2]{Kn} and the proof of this latter theorem relies on two lemmas, namely \cite[Lemma 2]{Kn} and \cite[Lemma 5]{Kn}. We observe that Knutson gives a proof of \cite[Lemma 2]{Kn} in the case $\mathbb{K}=\mathbb{F}_p$ which easily extends to perfect field of characteristic $p$; the proof is actually the same, one has just to keep in mind that every element $c$ of a perfect field $\mathbb{K}$ of characteristic $p$ has a $p$th root in $\mathbb{K}$ (in the case $\mathbb{K}=\mathbb{F}_p$, $c^p=c$). Furthermore, since \cite[Lemma 5]{Kn} holds for perfect fields of characteristic $p>0$, we get that \cite[Theorem 2]{Kn}, and thus \cite[Theorem 4]{Kn}, extends to polynomial rings over perfect fields of positive characteristic. \par
\end{oss}

To prove Proposition \ref{pp1}, we reduce to the case of perfect fields of positive characteristic so that we can apply Remark \ref{perfect}.\par
In the proof we will need these well known facts.

\begin{prop}(see e.g. \cite[p.46]{Ma})\label{Fact 1}The extension of polynomial rings 
$$S=\mathbb{K}[x_1,\ldots,x_n] \hookrightarrow \overline{S}=\overline{\mathbb{K}}[x_1, \ldots,x_n]$$
is a flat extension. 
\end{prop}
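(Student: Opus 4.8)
The plan is to exhibit $\overline{S}$ as a \emph{free}, hence flat, module over $S$. The starting point is the canonical isomorphism of $S$-algebras
$$\overline{S} = \overline{\mathbb{K}}[x_1,\ldots,x_n] \;\cong\; \overline{\mathbb{K}} \otimes_{\mathbb{K}} \mathbb{K}[x_1,\ldots,x_n] = \overline{\mathbb{K}} \otimes_{\mathbb{K}} S,$$
which identifies the polynomial ring over $\overline{\mathbb{K}}$ with the extension of scalars of $S$ along the field inclusion $\mathbb{K} \hookrightarrow \overline{\mathbb{K}}$; under this identification a monomial $x^a$ with coefficient $c \in \overline{\mathbb{K}}$ corresponds to $c \otimes x^a$, and the $S$-module structure on the right-hand side (acting on the second factor) matches multiplication by polynomials on the left.

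Since $\mathbb{K}$ is a field, the $\mathbb{K}$-vector space $\overline{\mathbb{K}}$ is free: choose a $\mathbb{K}$-basis $\{e_i\}_{i \in \Lambda}$ of $\overline{\mathbb{K}}$, with $\Lambda$ possibly infinite, so that $\overline{\mathbb{K}} \cong \bigoplus_{i \in \Lambda} \mathbb{K}$ as $\mathbb{K}$-modules. Tensoring over $\mathbb{K}$ with $S$ and using that tensor product commutes with arbitrary direct sums gives
$$\overline{S} \;\cong\; \overline{\mathbb{K}} \otimes_{\mathbb{K}} S \;\cong\; \Big(\bigoplus_{i \in \Lambda} \mathbb{K}\Big) \otimes_{\mathbb{K}} S \;\cong\; \bigoplus_{i \in \Lambda} S,$$
so $\overline{S}$ is a free $S$-module with basis $\{e_i\}_{i \in \Lambda}$. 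A free module is flat, which proves the claim.

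If one prefers to avoid choosing a basis, the same conclusion follows from the general principle that flatness is preserved under base change: $\overline{\mathbb{K}}$ is flat over $\mathbb{K}$ (every module over a field is flat), and for any ring map $\mathbb{K} \to S$ the module $\overline{\mathbb{K}} \otimes_{\mathbb{K}} S$ is flat over $S$ — indeed, for every $S$-module $N$ there is a natural isomorphism $N \otimes_S (\overline{\mathbb{K}} \otimes_{\mathbb{K}} S) \cong N \otimes_{\mathbb{K}} \overline{\mathbb{K}}$, and the functor $- \otimes_{\mathbb{K}} \overline{\mathbb{K}}$ is exact. There is essentially no obstacle in this statement; the only point deserving a little care is the bookkeeping in the isomorphism $\overline{S} \cong \overline{\mathbb{K}} \otimes_{\mathbb{K}} S$ and the fact that the relevant flatness is over $S$, not over $\mathbb{K}$ — which is exactly what the base-change step, or equivalently the explicit free-basis description, supplies.
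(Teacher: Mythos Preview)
Your proof is correct; the paper itself does not prove this proposition but merely cites it as a well-known fact (Matsumura, p.~46). Your argument via the free-module description $\overline{S} \cong \bigoplus_{i \in \Lambda} S$ (or equivalently via base change of flatness) is the standard one and entirely adequate here.
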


\begin{prop}(see e.g. \cite[Theorem 7.4]{Ma})\label{Fact 2} Let $\pi: A \longrightarrow B$ be a flat ring extension and $I$ and $J$ two ideals of $A$. Then:
\begin{itemize}
\item[\textbf{(i)}] $(I \cap J)B=IB \cap JB$
\item[\textbf{(ii)}]If $J$ is finitely generated then $(I:J)B=IB:JB$.
\end{itemize}
\end{prop}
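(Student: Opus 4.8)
The plan is to derive both statements from one principle: for a flat extension $\pi\colon A \to B$, the functor $-\otimes_A B$ carries exact sequences of $A$-modules to exact sequences of $B$-modules, and moreover $(A/K)\otimes_A B \cong B/KB$ for every ideal $K\subseteq A$ (this identification is just right-exactness of tensor applied to $0\to K\to A\to A/K\to 0$, noting that the image of $K\otimes_A B\to B$ is $KB$, and it needs no flatness). Before using flatness I would record that one inclusion in each part is automatic: clearly $(I\cap J)B \subseteq IB\cap JB$, and since $a\in(I:J)$ gives $\pi(a)\,JB\subseteq IB$ we get $(I:J)B\subseteq IB:JB$ as well; so in both cases the reverse inclusion is what has to be proved.

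For (i) I would start from the injective $A$-linear map $A/(I\cap J)\to A/I\oplus A/J$, $\bar a\mapsto(\bar a,\bar a)$ (injective because its kernel is the set of classes of elements of $I\cap J$), apply $-\otimes_A B$, invoke flatness to keep it injective, and rewrite it via the identification above as an injection $B/(I\cap J)B\to B/IB\oplus B/JB$, $\bar b\mapsto(\bar b,\bar b)$. Since the kernel of this map is exactly $(IB\cap JB)/(I\cap J)B$, injectivity forces $IB\cap JB=(I\cap J)B$.

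For (ii) I would do the analogous thing with the finitely many generators $J=(a_1,\dots,a_m)$: the $A$-linear map $A/(I:J)\to(A/I)^{\oplus m}$, $\bar a\mapsto(\overline{a a_1},\dots,\overline{a a_m})$, is injective because $a a_i\in I$ for all $i$ is the same as $aJ\subseteq I$. Tensoring with $B$, using flatness and the identification, gives an injection $B/(I:J)B\to(B/IB)^{\oplus m}$, $\bar b\mapsto(\overline{\pi(a_1)b},\dots,\overline{\pi(a_m)b})$; its kernel is $(IB:JB)/(I:J)B$ because $JB$ is generated by $\pi(a_1),\dots,\pi(a_m)$, so injectivity gives $(I:J)B=IB:JB$. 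A variant would be to first treat a principal $J=(a)$, where the relevant injection is multiplication by $a$ from $A/(I:a)$ into $A/I$, and then obtain the general finitely generated case from part (i) via $(I:J)=\bigcap_{i=1}^m(I:a_i)$.

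I do not expect a real obstacle here, since this is a standard fact; the one point I would be careful about is why finite generation of $J$ is needed in (ii). It is used precisely so that the target of the embedding is a finite direct sum: tensoring commutes with finite direct sums but not with infinite products, and correspondingly flat base change commutes with finite intersections of ideals (iterated part (i)) but not with arbitrary ones — and $(I:J)=\bigcap_{a\in J}(I:a)$ is an infinite intersection once $J$ fails to be finitely generated.
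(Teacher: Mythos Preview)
Your proof is correct. Note that the paper does not prove this proposition at all: it merely cites it as a standard fact from Matsumura's textbook, so there is no ``paper's own proof'' to compare against. Your argument is essentially the classical one found there --- embed $A/(I\cap J)$ (resp.\ $A/(I:J)$) into a finite direct sum, tensor with $B$, and use flatness to preserve injectivity --- and your remark on why finite generation of $J$ is needed in (ii) is exactly the right diagnosis.
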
 

\begin{proof}[Proof of Proposition \ref{pp1}]Let $\mathbb{K} \hookrightarrow \overline{\mathbb{K}}$ be the extension of $\mathbb{K}$ to its algebraic closure $\overline{\mathbb{K}}$. Since $\car (\mathbb{K})=p$, then $\overline{\mathbb{K}}$ is a perfect field of characteristic $p$.\par
Let $\bar{S}= \overline{\mathbb{K}}[x_1, \ldots, x_n]$ and consider the natural extension:

\begin{equation*}
\iota: S\longrightarrow \overline{S}.
\end{equation*}

So $\overline{f}:= \iota(f)$ is a polynomial in $\overline{S}$ (we regard $f$ as a polynomial with coefficients in $\overline{K}$). Again we can construct the family $\overline{\mathcal{C}_f}:=\mathcal{C}_{\overline{f}}$ in $\overline{S}$.\par
First of all, we claim that if $I \in \mathcal{C}_f$ then $I\overline{S} \in \overline{\mathcal{C}_f}$. Indeed, by Proposition \ref{Fact 1},
$ \iota :S \longrightarrow \overline{S}$ is a flat extension and we can then use Proposition \ref{Fact 2} to get the following equalities:
\begin{itemize}
\item $(I+J)\overline{S}=I \overline{S}+J \overline{S}$ (always true)
\item $(I \cap J) \overline{S}= I \overline{S}\cap J \overline{S}$ (true for flat extensions)
\item $(I:J)\overline{S}=I \overline{S}:J \overline{S}$ (true for flat extensions and $J$ finitely generated).
\end{itemize}

Consider $(f) \in \mathcal{C}_f$, then $(f)\overline{S}=(\overline{f}) \subseteq \overline{S}$ and $(\overline{f}) \in \overline{\mathcal{C}_f}$ by definition.\par
Now let $I,J \in \mathcal{C}_f$ such that $I \overline{S}, J \overline{S} \in \overline{\mathcal{C}_f}$. By definition $I+J, I \cap J \in \mathcal{C}_f$.\\
Using previous identities, we get

\begin{eqnarray*}
(I+J)\overline{S}=\underbrace{I\overline{S}}_{\in \overline{\mathcal{C}_f}}+\underbrace{J \overline{S}}_{\in \overline{\mathcal{C}_f}} \in \overline{\mathcal{C}_f}\\
(I\cap J)\overline{S}=\underbrace{I\overline{S}}_{\in \overline{\mathcal{C}_f}}\cap\underbrace{J \overline{S}}_{\in \overline{\mathcal{C}_f}} \in \overline{\mathcal{C}_f}.
\end{eqnarray*}

Eventually, let's consider $I \in \mathcal{C}_f$ and $J \subseteq S$ an arbitrary ideal. By definition $I:J \in \mathcal{C}_f$. Suppose $I \overline{S} \in \overline{\mathcal{C}_f}$. Since $J$ is finitely generated, then
$$(I:J) \overline{S}=\underbrace{I\overline{S}}_{\in \overline{\mathcal{C}_f}}:\underbrace{J \overline{S}}_{\subseteq \overline{S}} \in \overline{\mathcal{C}_f}.$$

This proves the claim. Using this result, we can easily conclude the proof. In fact, let $I$ be an ideal in $\mathcal{C}_f$. Then $I \overline{S} \in \overline{\mathcal{C}_f}$ and by Remark \ref{perfect} $\lt(I \overline{S})$ is squarefree beacause we are  working in a polynomial ring over a perfect field of characteristic $p>0$ .
But since Buchberger's algorithm is ``stable" under base extensions, we have

$$\lt(I \overline{S})= \lt(I) \overline{S}.$$

So $\lt(I)$ is squarefree.
\end{proof}
\subsection{Fields of characteristic 0}
Let $\mathbb{K}$ be  a field of characteristic $0$ and let $f \in S=\mathbb{K}[x_1,\ldots,x_n]$ be a polynomial such that $\lt (f)$ is squarefree for some term order $\prec$. As in the previous cases, we can construct the family $\mathcal{C}_f$ as the smallest set of ideals such that:
\begin{itemize}
\item[•] $(f) \in \mathcal{C}_f$
\item[•] $I \in \mathcal{C}_f, \quad J \subseteq S \Rightarrow I:J \in \mathcal{C}_f$
\item[•] $I, J \in \mathcal{C}_f \Rightarrow I+J, \quad I \cap J \in \mathcal{C}_f$.
\end{itemize}

We want to prove the analogous of Proposition \ref{pp1} in the case of polynomial rings over fields of characteristic 0.
\begin{prop}\label{car0}
Let $\mathbb{K}$ be  a field of characteristic $0$ and let $f$ be a polynomial in $ S=\mathbb{K}[x_1,\ldots,x_n]$ such that $\lt (f)$ is squarefree for some term order $\prec$.
If $I \in \mathcal{C}_f$ then $\lt(I)$ is squarefree.
\end{prop}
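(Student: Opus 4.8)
The plan is to reuse the flat-base-change idea from the proof of Proposition \ref{pp1}, but with the passage to the algebraic closure replaced by a reduction modulo $p$, so that Proposition \ref{pp1} can be applied in positive characteristic and the conclusion transported back. First I would note that a given $I \in \mathcal{C}_f$ involves only finitely much data: because $\mathcal{C}_f$ is the \emph{smallest} family closed under the three operations, the membership $I \in \mathcal{C}_f$ is witnessed by a finite derivation $(f) = D_0, D_1, \dots, D_m = I$ in which each $D_k$ is a sum or an intersection of two earlier terms, or a colon $D_j : J_k$ with $j<k$; by Noetherianity each $J_k$ can be taken finitely generated. Let $A \subseteq \mathbb{K}$ be the $\mathbb{Z}$-subalgebra generated by all coefficients occurring in $f$ and in fixed finite generating sets of the $J_k$; it is a finitely generated $\mathbb{Z}$-algebra and a domain with fraction field of characteristic $0$, and $\mathbb{K}$ is flat over $A$ (the inclusion $A \hookrightarrow \mathbb{K}$ factors as $A \hookrightarrow \operatorname{Frac}(A) \hookrightarrow \mathbb{K}$, a localization followed by a field extension), so $S_A := A[x_1,\dots,x_n] \hookrightarrow S$ is flat. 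Running the derivation inside $S_A$ with the same polynomial generators produces ideals $(f_A) = (D_A)_0,\dots,(D_A)_m =: I_A$, and by Proposition \ref{Fact 1} and Proposition \ref{Fact 2} applied to $S_A \hookrightarrow S$ one gets $I_A S = I$ step by step.

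Next I would spread things out and reduce modulo a maximal ideal. By generic freeness there is a single nonzero $g \in A$ such that the finitely many $A$-modules relevant to the derivation become free over $A_g$: the quotients $S_A/(D_A)_k$ and $S_A/\lt(I_A)$, the comparison (co)kernels between consecutive steps of the derivation (these control whether reduction commutes with $\cap$ and with colons), and a finite Gr\"obner basis of $I = I_A \otimes_A \mathbb{K}$ cleared into $S_A$ together with the product of its leading coefficients (this controls whether reduction commutes with $\lt$). For any maximal ideal $\mathfrak m \subset A_g$, the residue field $\mathbb{F} := A_g/\mathfrak m$ is finite, say of characteristic $p>0$, and freeness forces reduction modulo $\mathfrak m$ to commute with every step of the derivation and with the passage to initial ideals. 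Hence $\overline{I_A} \in \mathcal{C}_{\overline{f_A}}$ inside $\mathbb{F}[x_1,\dots,x_n]$; moreover $\lt(\overline{f_A}) = \prod_i x_i$, since the monomial $\prod_i x_i$ survives reduction with coefficient $1$ and reduction can only delete monomials; and $\lt(\overline{I_A})$ and $\lt(I_A)$ are generated by the same set of monomials.

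Now Proposition \ref{pp1}, applied over the positive-characteristic field $\mathbb{F}$, gives that $\lt(\overline{I_A})$ is squarefree; since $\lt(I_A)$ is the monomial ideal on the same generators, it too is squarefree, and therefore so is $\lt(I) = \lt(I_A)\,S$ (Buchberger's algorithm being stable under the flat extension $S_A \hookrightarrow S$, after inverting the leading coefficients chosen above). The step I expect to be the main obstacle is the generic-freeness bookkeeping in the middle paragraph: checking that a single localization $A_g$ can be chosen so that reduction modulo $\mathfrak m$ commutes \emph{simultaneously} with colons, intersections, and the formation of initial ideals, which is exactly what lets Proposition \ref{pp1} in characteristic $p$ be pulled back to characteristic $0$. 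The remaining ingredients — tracking the finitely many polynomials in the derivation and invoking the flat base-change facts already recorded — are routine.
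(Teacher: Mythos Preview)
Your argument is correct and follows the same overall strategy as the paper: reduce modulo a prime, apply Proposition~\ref{pp1} in positive characteristic, and pull the squarefreeness of the initial ideal back to characteristic zero. The difference is in how the commutation of reduction with $\cap$, colon, and $\lt$ is established. The paper does this concretely: Lemma~\ref{redp} tracks the finitely many denominators arising in Buchberger's algorithm and shows they are avoided by all large primes, while intersections and colons are rewritten via elimination theory (using $I\cap J=(tI+(1-t)J)\cap S$ and reducing $I{:}J$ to intersections), so that every step becomes a Gr\"obner computation to which Lemma~\ref{redp} applies; an auxiliary Lemma~\ref{puguale} is then needed to pass from ``for each $I$ some $p$'' to a uniform $p$. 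You replace all of this by a single spreading-out step: fix a finite derivation of $I$, descend to a finitely generated $\mathbb{Z}$-domain $A\subset\mathbb{K}$, and use generic freeness to localize so that the relevant comparison modules become free, whence reduction modulo any maximal ideal of $A_g$ (automatically of positive characteristic) commutes simultaneously with every operation in the derivation and with $\lt$. This is more conceptual and avoids Lemma~\ref{puguale} entirely; the trade-off is that the bookkeeping you flag (e.g.\ freeness of $S_A/(I_A+J_A)$ forcing $(I_A\cap J_A)\otimes\mathbb{F}=(I_A\otimes\mathbb{F})\cap(J_A\otimes\mathbb{F})$, and the analogous short exact sequences for colons) must actually be written out, whereas the paper's version is self-contained for a reader unfamiliar with spreading-out arguments.
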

We know that this holds in polynomial rings over fields of characteristic $p>0$. Using Proposition \ref{pp1}, we will show that the same holds if we are working over fields of characteristic 0.

\subsubsection{Reduction modulo $p$ and initial ideals}
Let $\mathbb{K}$ be a field of characteristic 0 and define $S:= \mathbb{K}[x_1, \ldots,x_n]$. Consider an ideal $I \subseteq S$. Since $I$ is finitely generated, it is always possible to construct a finitely generated $\mathbb{Z}$-algebra $A \subset \mathbb{K}$ such that if $I':= I \cap A[x_1, \ldots, x_n]$ then $I' S=I$. To do so it suffices to 
take $A= \mathbb{Z}[\alpha_1, \ldots, \alpha_s]$ where the $\alpha_i $ are the coefficients of  the generators of $I$ which are not integers.

\begin{es}
If $I=(\sqrt{2} x - \pi y) \subset \mathbb{R}[x,y]$, we can take $A= \mathbb{Z}[\sqrt{2}, \pi]$.
\end{es}

Let $p$ be a prime number which is not invertible in $A$ and fix $P \in \Min(pA)$. The quotient ring $A/P$ is an integral domain of characteristic $p>0$ and we can define $I'_p$ to be the image of $I'$ under the projection map
\begin{equation*}
 \pi \colon A[x_1, \ldots, x_n] \longrightarrow \faktor{A}{P}[x_1, \ldots, x_n].
\end{equation*}

 Since $A/P$ is a domain we can construct its fraction field $F\left(\faktor{A}{P}\right)$ and we define $S_p:=F\left(\faktor{A}{P}\right)[x_1,\ldots,x_n]$. So we can consider the extended ideal $I(p):=I'_p S_p$ in the polynomial ring $S_p$. \par
This is what we call a \emph{reduction modulo $p \in \mathbb{N}$}. Although the notation might be confusing, the ideal $I(p)$ does not depend only on $p$ and $I$ but also on the choice of $P \in \Min (pA)$. \par
To summarize, we have constructed the following diagram:\\

 \begin{center}
 \begin{tikzcd} 
A[x_1, \ldots, x_n] \arrow{r}{} \arrow{d}{\pi} & S=\mathbb{K}[x_1, \ldots, x_n]& \car=0\\ 
\left(\faktor{A}{P}\right)[x_1, \ldots, x_n] \arrow{r}{} & F\left(\faktor{A}{P}\right)[x_1, \ldots, x_n] & \car=p>0
\end{tikzcd}
 \end{center}

Note that the lower map in the diagram is flat.\\

The next lemma states that taking initial ideals commutes with reduction modulo $p$ for all sufficently large $p$.

\begin{lem}\label{redp}
Let $\mathbb{K}$ be a field of characteristic 0 and $S= \mathbb{K}[x_1, \ldots,x_n]$ the polynomial ring over $\mathbb{K}$ with a fixed term order $\prec$. Take $I_1, \ldots,I_m$ ideals in $S$. Then for all $ p \gg 0$ there exists a reduction modulo $p$ such that 
$$\lt(I_j(p))=\lt(I_j)(p) \qquad \forall j=1, \ldots, m.$$
\end{lem}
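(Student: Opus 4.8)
The plan is to reduce the lemma to the stability of Buchberger's algorithm under specialization. For each $j$ I would fix a Gr\"obner basis $G_j=\{g_{j,1},\dots,g_{j,k_j}\}$ of $I_j$ with respect to $\prec$. Enlarging the finitely generated $\mathbb{Z}$-algebra $A\subset\mathbb{K}$ so that all the $g_{j,l}$ lie in $A[x_1,\dots,x_n]$, and then inverting in $A$ the (finitely many) leading coefficients of these polynomials together with the finitely many further denominators produced below, I may assume simultaneously for all $j$ that $A$ is still a finitely generated $\mathbb{Z}$-algebra and a domain of characteristic $0$, that $G_j\subset A[x_1,\dots,x_n]$, that the leading coefficient of each $g_{j,l}$ is a unit of $A$, and that $G_j$ generates $I_j':=I_j\cap A[x_1,\dots,x_n]$ as an ideal of $A[x_1,\dots,x_n]$ (this last point because, $G_j$ being a Gr\"obner basis with unit leading coefficients, the division algorithm of any $h\in I_j'$ by $G_j$ stays in $A[x_1,\dots,x_n]$ and ends with remainder $0$). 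The same $A$ also serves as a model for each $\lt(I_j)$, since $\lt(I_j)$ is a monomial ideal.

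Next I would run Buchberger's criterion over $A$: for each $j$ and each pair $(a,b)$ the division algorithm produces a standard representation $S(g_{j,a},g_{j,b})=\sum_l h^{(j)}_{ab,l}\,g_{j,l}$ with $\lt\bigl(S(g_{j,a},g_{j,b})\bigr)\succeq\lt\bigl(h^{(j)}_{ab,l}g_{j,l}\bigr)$ for every $l$, and the coefficients of the $h^{(j)}_{ab,l}$ are built from those of the $g_{j,l}$ by ring operations and by division by products of the leading coefficients of the $g_{j,l}$; hence, after the localization above, all these finitely many identities already hold in $A[x_1,\dots,x_n]$. Since $A$ is a finitely generated $\mathbb{Z}$-algebra and a domain of characteristic $0$, only finitely many primes $p$ satisfy $pA=A$, so for all $p\gg 0$ the ideal $pA$ is proper; fix any $P\in\Min(pA)$, which gives a reduction modulo $p$, and let $\pi\colon A[x_1,\dots,x_n]\to (A/P)[x_1,\dots,x_n]$ be the projection. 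A unit of $A$ maps to a unit of $A/P$, so $\lt\bigl(\pi(g_{j,l})\bigr)=\lt(g_{j,l})$ as monomials; applying $\pi$ to the standard representations above shows they remain standard representations over $A/P$ (hence over $S_p$), so by Buchberger's criterion $\{\pi(g_{j,1}),\dots,\pi(g_{j,k_j})\}$ is a Gr\"obner basis, with respect to $\prec$, of the ideal it generates in $S_p$, namely $I_j(p)$. Therefore $\lt(I_j(p))$ is the monomial ideal of $S_p$ generated by $\lt(g_{j,1}),\dots,\lt(g_{j,k_j})$, and since $\lt(I_j)$ is a monomial ideal, hence defined over $\mathbb{Z}$, this is exactly $\lt(I_j)(p)$.

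The one thing that requires care is the bookkeeping: the finitely many Buchberger computations for $I_1,\dots,I_m$ together involve only finitely many coefficients and produce only finitely many denominators, so a single cofinite set of primes $p$ — and, for each such $p$, a single choice of $P\in\Min(pA)$ — serves all $j$ at once. I expect the main (though routine) obstacle to be precisely this step of absorbing every division performed by Buchberger's algorithm into a localization of $A$ that is still a finitely generated $\mathbb{Z}$-algebra, together with checking that $G_j$ continues to generate the reduced ideal $I_j(p)$; granting this, ``stability of Buchberger's algorithm under specialization'' gives the result.
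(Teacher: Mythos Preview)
Your argument is correct and follows essentially the same route as the paper: both proofs rest on the stability of Buchberger's algorithm under specialization, and both conclude by observing that only finitely many primes obstruct this stability. The only notable technical difference is in how the bad primes are excluded---you localize $A$ at the finitely many denominators first and then invoke that a finitely generated $\mathbb{Z}$-domain of characteristic~$0$ has only finitely many rational primes inverted, whereas the paper keeps $A$ fixed and instead argues directly (via Krull's Hauptidealsatz and the disjointness of $\Min(pA)$ for distinct $p$) that for $p\gg 0$ no $P\in\Min(pA)$ contains any of the denominators; your packaging is arguably cleaner, and it also lets you treat all $I_1,\dots,I_m$ at once rather than reducing to $m=1$ as the paper does.
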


\begin{proof}
It suffices to prove the result for $m=1$. If $m>1$ we can always choose $p$ greater than the maximum of the $p_j$ such that the result is true for $I_j$ and we are done.\par
Consider an ideal $I=(f_1, \ldots, f_r) \subseteq S$ and
construct the finitely generated $\mathbb{Z}$-algebra $A = \mathbb{Z}[\alpha_1, \ldots, \alpha_t]\subset \mathbb{K}$ where the $\alpha_i $ are the coefficients of  the generators of $I$ which are not integers. Note that since $A$ is a finitely generated $\mathbb{Z}$-algebra, all the polynomial rings we are dealing with are Noetherian.\par
Accordingly with the previous notation, we set 

\begin{equation*}
\begin{split}
I'&:= I \cap A[x_1, \ldots, x_n]\\
I''&:= I'F(A)[x_1, \ldots, x_n]=I \cap F(A)[x_1, \ldots, x_n].
\end{split}
\end{equation*}

Using Buchberger's algorithm we can compute a Gr\"obner basis for $I''$. Let $G_{I''}=\{g_1, \ldots,g_s\}$ be this Gr\"obner basis. Since Buchberger's algorithm is ``stable" under base extensions we get $G_{I''}=G_{I}$, that is $G_{I''}$ is also a Gr\"obner basis for $I$ in $S$.\par
Observe that, possibly multiplying by an element of $A$, we can assume that $g_1, \ldots, g_s$ are polynomials in $A[x_1,\ldots,x_n]$. \par
In the computation of the Gr\"obner basis, no new coefficients appear but we need to invert some elements  $\lambda_1,\ldots,\lambda_t \in A$ to compute S-polynomials. 
If we find a prime number $p$ and a minimal prime $P \in \Min(pA)$ such that $\lambda_1,\ldots,\lambda_t \notin P$, then $\lambda_1,\ldots,\lambda_t $ are invertible in $F\left(\faktor{A}{P}\right)$, so the algorithm is exactly the same also when we reduce modulo $p$. This will imply that $\overline{G_I}=\{\overline{g}_1,\ldots,\overline{g}_s\}$ is a Gr\"obner basis for $I(p)$, hence $$\lt(I(p))=(\lt(\overline{g}_1), \ldots, \lt(\overline{g}_s)).$$\par
Since we are working in Noetherian domains, the principal ideal $(pA)$ has finitely many minimal primes and by Krulls Hauptidealsatz if $P \in \Min(pA)$ then $\hgt(P)=1$. Moreover it's easy to see that if $p$ and $q$ are two different prime numbers, then $\Min(pA) \cap \Min(qA)= \emptyset$. Assume that there exists a prime ideal $Q \in\Min(pA) \cap \Min(qA)$, then $Q \supseteq (pA),(qA)$. In particular $p,q \in Q$ and they are coprime. This would imply that $1\in Q$, a contradiction. Similarly the ideal $(\lambda_i A)$ has finitely many minimal primes of height $1$, therefore there exists a prime number $\overline{p_i}$ such that $$\forall p> \overline{p_i} ,\ \forall P \in \Min (pA) \quad
\lambda_i \notin P  .$$
Taking $\overline{p}:= \max \overline{p_i}$, we get that 
$$\forall p> \overline{p} ,\ \forall P \in \Min (pA) \quad
\lambda_1, \ldots, \lambda_t \notin P .$$
This proves that $\lt(I(p))=(\lt(\overline{g}_1), \ldots, \lt(\overline{g}_s))$ for $p> \overline{p}$. \par 
Using a similar argument we can prove that there exists a prime number $\tilde{p} >0$ such that $$\lt (I) (p)=(\overline{\lt(g_1)},\ldots,\overline{\lt(g_s)})=(\lt(\overline{g}_1), \ldots, \lt(\overline{g}_s)) \qquad \forall p>\tilde{p}.$$\par 
So we can conclude that $\lt (I) (p)=\lt (I (p))$ for $p\gg 0$.
\end{proof}

\subsubsection{Knutson ideals in characteristic 0}

We want to prove Proposition \ref{car0} in characteristic 0. To do so, we reduce to the case of fields of positive characteristic using previous results.\par
As in the case of fields of positive characteristic, we first need to show that if $I \in \mathcal{C}_f$ then $ I(p) \in \mathcal{C}_f (p):=\mathcal{C}_{f(p)}$ for all prime numbers large enough.\\

The following result simplifies our proof, allowing us to prove the result for a single ideal at time using \ref{redp}.



\begin{lem}\label{puguale} TFAE:
\begin{itemize}
\item[1.] $\exists \tilde{p} \gg0$ s.t. $I \in \mathcal{C}_f\Rightarrow I(p) \in \mathcal{C}_f (p) \quad \forall p\geq\tilde{p}$.
\item[2.] $\forall I \in \mathcal{C}_f \quad \exists \tilde{p}_I \gg 0$ s.t. $I(p) \in \mathcal{C}_f (p)\quad \forall p\geq\tilde{p}_I$.
\end{itemize}
\begin{proof}
$1.\Rightarrow 2.$ Obvious.\\
$2.\Rightarrow 1.$ If 2 holds, then $\lt (I(p))$ is squarefree $\forall I \in \mathcal{C}_f$ and for $p \geq\tilde{p}_I$. But we know from Lemma \ref{redp} that $\lt (I(p))=\lt (I)(p)$ for $p$ large enough, so $\lt (I)$ is squarefree for every $I \in \mathcal{C}_f$. By Remark \ref{finite}, we get that $\mathcal{C}_f$ is finite. Once we know that $\mathcal{C}_f$ is finite, we can take $\tilde{p}= \max p_I$ and we are done.

\end{proof}
\end{lem}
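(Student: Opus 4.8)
The plan is to prove the two implications separately, with essentially all of the content concentrated in $2 \Rightarrow 1$.

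For $1 \Rightarrow 2$ there is nothing to do: if a single $\tilde p$ works uniformly for all $I \in \mathcal{C}_f$, then taking $\tilde p_I := \tilde p$ witnesses statement 2.

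For $2 \Rightarrow 1$ the only real difficulty is that, a priori, $\mathcal{C}_f$ might be an infinite family, so one cannot simply set $\tilde p := \max_{I \in \mathcal{C}_f} \tilde p_I$. The strategy is therefore to first extract from hypothesis 2 the \emph{finiteness} of $\mathcal{C}_f$, and only afterwards take the maximum. To obtain finiteness, fix an arbitrary $I \in \mathcal{C}_f$. By hypothesis 2 we have $I(p) \in \mathcal{C}_{f(p)}$ for all $p \geq \tilde p_I$ (for a suitable reduction modulo $p$). Now $\mathcal{C}_{f(p)}$ lives inside the polynomial ring $S_p = F(A/P)[x_1,\dots,x_n]$ over the field $F(A/P)$ of characteristic $p>0$, and $\lt(f(p)) = \prod_i x_i$: indeed the coefficient of $\prod_i x_i$ in $f$ is $1$, which remains nonzero after reduction, so the leading term is unchanged. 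Hence Proposition \ref{pp1} applies in $S_p$ and yields that $\lt(I(p))$ is squarefree. By Lemma \ref{redp}, for $p$ large enough one also has $\lt(I(p)) = \lt(I)(p)$; and since the reduction modulo $p$ of a monomial ideal is generated by the same monomials (squarefreeness being a purely combinatorial condition on exponent vectors, independent of the coefficient field), squarefreeness of $\lt(I)(p)$ forces squarefreeness of $\lt(I)$. As $I \in \mathcal{C}_f$ was arbitrary, every member of $\mathcal{C}_f$ has squarefree initial ideal; since $\mathcal{C}_f$ is closed under intersections, Remark \ref{finite} then gives that $\mathcal{C}_f$ is a finite set.

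Once $\mathcal{C}_f = \{I_1,\dots,I_N\}$ is known to be finite, the conclusion is immediate: set $\tilde p := \max_{1 \le j \le N} \tilde p_{I_j}$. For every $I \in \mathcal{C}_f$ and every $p \geq \tilde p$ we have $p \geq \tilde p_I$, so hypothesis 2 gives $I(p) \in \mathcal{C}_f(p)$, which is exactly statement 1. I expect the main obstacle to be exactly the apparent circularity in $2 \Rightarrow 1$: hypothesis 2 supplies only a pointwise bound $\tilde p_I$ for each $I$, and upgrading this to a uniform bound is possible only after one has bootstrapped, via Proposition \ref{pp1}, Lemma \ref{redp} and Remark \ref{finite}, to the finiteness of $\mathcal{C}_f$ — a direct "take the max" argument is not available until that point.
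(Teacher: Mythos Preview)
Your proof is correct and follows essentially the same route as the paper's: in the direction $2 \Rightarrow 1$ you first use Proposition~\ref{pp1} (applied in characteristic $p$) together with Lemma~\ref{redp} to deduce that every $I \in \mathcal{C}_f$ has squarefree initial ideal, then invoke Remark~\ref{finite} to conclude that $\mathcal{C}_f$ is finite, and finally take the maximum of the $\tilde p_I$. The paper's proof is terser (it does not name Proposition~\ref{pp1} explicitly when asserting that $\lt(I(p))$ is squarefree), but the logic is identical. One small remark: your claim that ``the coefficient of $\prod_i x_i$ in $f$ is $1$'' is not part of the hypotheses; what you actually need is that the leading coefficient of $f$ does not vanish modulo $P$ for $p \gg 0$, which follows from the argument of Lemma~\ref{redp} (or simply by normalising $f$).
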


\begin{proof}[Proof of Proposition \ref{car0}]
We begin by proving that if $I \in \mathcal{C}_f$ then there exists a prime number $\tilde{p}_I$ such that  $I(p) \in \mathcal{C}_f (p)=\mathcal{C}_{f(p)}$ for all $p \geq \tilde{p}_I$. By Lemma \ref{puguale}, this is equivalent to prove that there exists a prime number $\tilde{p}$ which does not depend on the choice of the ideal, such that if $I\in \mathcal{C}_f$ then $I(p) \in \mathcal{C}_f (p)=\mathcal{C}_{f(p)}$ for all $p \geq \tilde{p}$.\par
Consider $(f) \in \mathcal{C}_f$, then $(f)(p)=(f(p)) \subseteq  S_p$ and as we already explained $(f(p)) \in \mathcal{C}_f (p)=\mathcal{C}_{f(p)}$ for all $p\gg 0$.\par
Now let $I,J \in \mathcal{C}_f$ such that $I(p), J (p) \in \mathcal{C}_f (p)$. By definition of $\mathcal{C}_f$, $I+J, I \cap J \in \mathcal{C}_f$ and we need to prove that $(I+J)(p), (I \cap J)(p) \in \mathcal{C}_f (p)$.\\
Obviously 
\begin{eqnarray*}
(I+J)(p)=\underbrace{I(p)}_{\in \mathcal{C}_f (p)}+\underbrace{J (p)}_{\in \mathcal{C}_f (p)} \in \mathcal{C}_f (p).
\end{eqnarray*}

Now consider the intersection ideal $I \cap J \in S$. It is clear that  $(I \cap J)(p) \subseteq I(p) \cap J(p)$. If we show that they have the same initial ideal, we get 
\begin{eqnarray*}
(I\cap J)(p)=\underbrace{I(p)}_{\in \mathcal{C}_f (p)}\cap\underbrace{J (p)}_{\in \mathcal{C}_f (p)} \in \mathcal{C}_f (p).
\end{eqnarray*}
Using elimination theory and Buchberger's algorithm, we can compute a Gr\"obner basis of $I \cap J$. In fact it is a well know fact (see e.g. \cite[Theorem 11,  p.187]{CLO}) that
$$I \cap J= (tI+(1-t)J) \cap S$$
where $t$ is a new variable and $(tI+(1-t)J)$ is an ideal in $S[t]$ that we are contracting back to $S$. \par

In other words, a Gr\"obner basis of $I \cap J$ is obtained from a Gr\"obner basis of $tI+(1-t)J$ by dropping the elements of the basis that contain the variable $t$ (the so called first elimination ideal with respect to a suitable term order). \par
Therefore
 $$(I \cap J)(p)= ((tI+(1-t)J) \cap S)(p)$$
 $$I(p) \cap J(p)= (tI(p)+(1-t)J(p)) \cap S_p.$$

By Lemma \ref{redp}, $\lt (tI+(1-t)J)(p)=\lt (tI(p)+(1-t)J(p)) $ for all $ p\gg 0$ and we can conclude that  
$$\lt (I \cap J)(p)= \lt (I(p) \cap J(p)).$$

A similar argument works for $I:J$ with $I \in \mathcal{C}_f (p)$ and $J=(f_1,\ldots,f_l) \ \subset S$. In fact it is known (see e.g. \cite[Theorem 11, p.196]{CLO}) that 
$$I:J=\left( \dfrac{1}{f_1}\left( I \cap (f_1)\right)\right) \cap \left(\dfrac{1}{f_2}\left( I \cap (f_2)\right) \right) \cap \ldots \cap \left(\dfrac{1}{f_l}\left( I \cap (f_l)\right) \right). $$

Thus, we can use again elimination theory to compute these intersections and arguing as we have done before, we get that 
\begin{eqnarray*}
(I:J)(p)=\underbrace{I(p)}_{\in \mathcal{C}_f (p)}:\underbrace{J (p)}_{\subseteq S_p} \in \mathcal{C}_f (p).
\end{eqnarray*}
In conclusion, we have proved that if $I \in \mathcal{C}_f$ then $I(p) \in \mathcal{C}_f (p)=\mathcal{C}_{f(p)}$ for all $p$ large enough.\par
Now let $I \in \mathcal{C}_f$. Then $I(p) \in \mathcal{C}_f(p)$  for $p \gg 0$ and by Proposition \ref{pp1} $\lt(I (p))$ is squarefree beacause we are  working in a polynomial ring over a field of positive characteristic.
But we know from Lemma \ref{redp} that 

$$\lt(I(p))=\lt(I)(p) \qquad \forall p \gg 0.$$

So $\lt(I)$ is squarefree.

\end{proof}
 \section{Determinantal ideals of Hankel matrices}
Denote by $X_m^{(l,n)}$ the generic Hankel matrix with $m$ rows and entries $x_l,\ldots,x_n$, that is

\[ X_m^{(l,n)}=
\begin{bmatrix}
    x_{l}       & x_{l+1} & x_{l+2} & \dots & x_{n-m+1} \\
    x_{l+1}       & x_{l+2} & x_{l+3} & \dots & x_{n-m+2} \\
    x_{l+2}       & x_{l+3} & x_{l+4} & \dots & x_{n-m+3} \\
   \vdots & \vdots & \vdots & \ddots & \vdots \\
    x_{l+m-1}       & x_{l+m} & x_{l+m-1} & \dots & x_{n}
\end{bmatrix}.
\]

Note that once we have fixed $m$, $l$ and $n$, the number of columns of  $X_m^{(l,n)}$ is $n-m-l+2$. \par
In particular we are interested in square Hankel matrices of size $m$ and rectangular Hankel matrices of size $m \times (m+1)$. In these cases, if we fix $m$ then $n$ is uniquely determined.  \par
Assume for simplicity that $l=1$:

\begin{minipage}{0.4\columnwidth}

    \[ X_m^{(1,n)}=
\begin{bmatrix}
    x_{1}       & x_{2} & x_{3} & \dots & x_{m} \\
    x_{2}       & x_{3} & x_{4} & \dots & x_{m+1} \\
    x_{3}       & x_{4} & x_{5} & \dots & x_{m+2} \\
   \vdots & \vdots & \vdots & \ddots & \vdots \\
    x_{m}       & x_{m+1} & x_{m+2} & \dots & x_{n}
\end{bmatrix} , 
\]
\end{minipage}
\hfill
\begin{minipage}[c]{0.3 \columnwidth}
\centering
square Hankel matrix:\\ $n=2m-1$
\end{minipage}

\begin{minipage}{0.4\columnwidth}

    \[ X_m^{(1,n)}=
\begin{bmatrix}
    x_{1}       & x_{2} & x_{3} & \dots & x_m& x_{m+1} \\
    x_{2}       & x_{3} & x_{4} & \dots & x_{m+1}& x_{m+2} \\
    x_{3}       & x_{4} & x_{5} & \dots & x_{m+2} & x_{m+3} \\
   \vdots & \vdots & \vdots & \ddots & &\vdots \\
    x_{m}       & x_{m+1} & x_{m+2} & \dots &x_{n-1} & x_{n}
\end{bmatrix},  
\]
\end{minipage}
\hfill
\vspace{5pt}
\begin{minipage}[c]{0.3 \columnwidth}
\centering
Hankel matrix of size $m \times (m+1)$: \\$n=2m.$
\end{minipage}\\

Let $X=X_{m}^{(1,n)}$ be a Hankel matrix and let $t \leq \min(m,n-m+1)$, we denote by $I_t(X)$ the determinantal ideal in $\mathbb{K}[x_1, \ldots,x_n]$ generated by all the $t$-minors of $X$.\\
\begin{oss}\label{osshank}
For $t \leq m\leq n+1-t$ it is known (cf. \cite[Corollary 2.2]{C}) that
 $$I_t(X_{m}^{(1,n)})=I_t(X_{t}^{(1,n)}).$$ That is $I_t(X_{m}^{(1,n)})$ does not depend on $m$ but only on $t$ and $n$.\\

\end{oss}

We now prove that determinantal ideals of a generic square Hankel matrix are Knutson ideals for a suitable choice of $f$.

\begin{thm} \label{prophank}Let $X=X_m^{(1,n)}$ be the square Hankel matrix of size $m$ with entries $x_1, \ldots, x_n$, where $n=2m-1$ and let $f$ be the polynomial $f= \det X \cdot \det X_{m-1}^{(2,n-1)}$  in $S=\mathbb{K}[x_1,\ldots,x_n]$. Then $I_t(X) \in \mathcal{C}_f$ for $t=1,\ldots, m$.

\end{thm}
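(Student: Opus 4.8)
The plan is to exhibit each determinantal ideal $I_t(X)$ as an element of $\mathcal{C}_f$ by building it up from $(f)$ using only the three allowed operations (colon, sum, intersection), and the key organizing principle is a downward induction on $t$. Since $f = \det X \cdot \det X_{m-1}^{(2,n-1)}$, its leading term (for a diagonal term order) should be the product of the two main-diagonal monomials $x_m^? \cdots$ — more precisely one must first check that $\lt(f)$ is a squarefree monomial, so that Definition \ref{K.I.} applies; this is a direct computation with the diagonal terms of the two Hankel minors, and one picks the term order so that the two antidiagonal/diagonal supports are disjoint (this is where the shifted Hankel matrix $X_{m-1}^{(2,n-1)}$ enters — its diagonal involves a different set of variables than that of $\det X$). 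Note that $(f) = (\det X) \cap (\det X_{m-1}^{(2,n-1)})$ is the first nontrivial element beyond $(f)$ itself, obtained since the two factors are coprime, but more usefully $(\det X) = (f) : \det X_{m-1}^{(2,n-1)}$ and symmetrically, so both principal ideals $(\det X)$ and $(\det X_{m-1}^{(2,n-1)})$ already lie in $\mathcal{C}_f$.

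The heart of the argument is to recover $I_t(X)$ for smaller $t$ from the top-dimensional case. Recall (Remark \ref{osshank}) that $I_t(X_m^{(1,n)}) = I_t(X_t^{(1,n)})$, so the relevant ideals are $I_t(X_t^{(1,2m-1)})$ for $t = 1,\dots,m$, and $I_m(X) = (\det X)$ is the starting point. The standard fact about Hankel (and generic) determinantal ideals is that $I_{t}$ is obtained from $I_{t+1}$ by a primary-decomposition/colon manipulation: the variety $V(I_{t+1})$ is the union of $V(I_t)$ with components where certain minors vanish, and $I_t = I_{t+1} : (\text{some minor or product of minors})$, or $I_t$ appears among the minimal primes of a sum $I_{t+1} + (g)$. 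The precise statement I would use is that for Hankel matrices the rank-stratification is well understood — for instance $I_t(X)$ is prime, $V(I_t)$ is irreducible, and one has localization arguments (invert a corner entry $x_1$ or $x_n$) showing $I_t(X)$ equals a colon ideal built from $I_{t+1}(X)$ together with the minors of the shifted matrix $X_{m-1}^{(2,n-1)}$, whose determinantal ideals we also control by the same downward induction applied to the factor $\det X_{m-1}^{(2,n-1)}$ of $f$. So I would set up a simultaneous downward induction on both families $\{I_t(X_t^{(1,2m-1)})\}$ and $\{I_t(X_t^{(2,2m-2)})\}$, using at each step: (i) a sum $I_{t+1} + I_{t+1}'$ of two already-constructed ideals, or (ii) a colon $I_{t+1} : g$ where $g$ is an already-constructed minor, and matching these to the geometric incidence of rank loci of $X$ and of the shifted Hankel matrix.

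The main obstacle — and where the real work lies — is step (ii): identifying, for each $t$, an explicit ideal-theoretic identity of the form $I_t = (\text{something in } \mathcal{C}_f) : (\text{something in } \mathcal{C}_f)$ or $I_t \in \Min(\text{something in }\mathcal{C}_f)$, and then verifying it. This requires knowing the primary decompositions of $I_{t+1} + (\text{minor})$ precisely, which for Hankel matrices can be extracted from the literature on their rank stratification (Conca's work on Hankel determinantal rings), but assembling exactly the right chain of operations so that every intermediate ideal stays inside $\mathcal{C}_f$ is delicate; in particular one must be careful that the "correction" primes one has to intersect away are themselves members of $\mathcal{C}_f$, which is why the second factor $\det X_{m-1}^{(2,n-1)}$ was built into $f$ in the first place. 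Once the chain of identities is in place, the conclusion is immediate: each $I_t(X)$ is obtained from $(f)$ by finitely many applications of colon, sum and intersection, hence $I_t(X) \in \mathcal{C}_f$ for all $t = 1,\dots,m$, and by the Main Theorem each such ideal is automatically radical with squarefree initial ideal, consistent with $I_t(X)$ being prime.
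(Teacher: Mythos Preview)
Your framework is right---downward induction starting from $(\det X),(\det Q)\in\mathcal C_f$ obtained as colon ideals of $(f)$---but the inductive engine you propose is too vague to constitute a proof, and you explicitly flag the hard step as unresolved. The paper's argument fills exactly this gap with a concrete and elementary mechanism that you are missing.

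First, the squarefree check is sharper than you indicate: with a diagonal term order one has $\lt(\det X)=x_1x_3\cdots x_n$ (the odd-indexed variables) and $\lt(\det Q)=x_2x_4\cdots x_{n-1}$ (the even-indexed ones), so $\lt(f)=\prod_{i=1}^n x_i$ exactly; there is nothing to choose.

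Second, and this is the real gap, you carry only the two families $I_t(X)$ and $I_t(Q)$ through the induction and hope to link consecutive $t$'s by colon operations or by primary decompositions pulled from the literature. The paper instead introduces two further auxiliary families, the rectangular submatrices $P_1=X_{m-1}^{(1,n-1)}$ (drop the last row of $X$) and $P_2=X_m^{(2,n)}$ (drop the first column), and runs a four-track zigzag in which every new ideal is obtained not by a colon but as a \emph{minimal prime} over a sum of ideals already in $\mathcal C_f$. The only external input is primeness of Hankel determinantal ideals together with the height formula $\hgt I_s(H)=n-2s+2$: since $I_{t-1}(P_1)$ and $I_{t-1}(P_2)$ contain $I_t(X)+I_{t-1}(Q)$ and are primes of height exactly one more, they are forced to be minimal primes of that sum; then $I_{t-1}(X)$ and $I_{t-2}(Q)$ are similarly minimal over $I_{t-1}(P_1)+I_{t-1}(P_2)$, and the zigzag continues down to $t=1$. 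No explicit primary decomposition and no localization identity is needed at any step. Your proposal, as written, supplies no such identities either, so it does not close; the missing idea is precisely the introduction of $P_1,P_2$ and the pure height argument that replaces the colon manipulations you were reaching for.
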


\begin{proof}
Fix a  diagonal term order $\prec$ on $S$ (that is a monomial term order such that the initial term of each minor is given by the product of its diagonal terms). Then

\begin{equation*}
\begin{split}
\lt (f)&=\lt( \det X) \cdot \lt (\det X_{m-1}^{(2,n-1)})=\\
&= ( x_1 \cdot x_3 \cdots x_n)\cdot ( x_2 \cdot x_4 \cdots x_{n-1})= \prod \limits_{i=1}^{n} x_i.
\end{split}
\end{equation*}

Hence $\lt (f)$ is squarefree and we can construct the Knutson family of ideals associated to $f$.\par
For simplicity of notation, we define

\begin{align*}
&P_1:= X_{m-1 }^{(1,n-1)}&&\text{: rectangular matrix obtained by dropping the last row of X}\\
&P_2:= X_{m }^{(2,n)}&&\text{: rectangular matrix obtained by dropping the first column of X}\\
&Q:= X_{m-1}^{(2,n-1)}&&\text{: square matrix obtained by dropping the last row  and the first}\\
& && \text{  column of X.}
\end{align*}

By Definition \ref{K.I.}, $(f) \in \mathcal{C}_f$ and  $(f): J \in \mathcal{C}_f$ for every ideal $J \subseteq S$. 
Choosing $J= (\det X )$ and $J=(\det Q)$, we get
 
\begin{align} \label{eq1}
\begin{split}
(f): (\det X)&=(\det Q) \in \mathcal{C}_f \\
(f):(\det Q)&=(\det X) \in \mathcal{C}_f.
\end{split}
\end{align}

In particular, $I_m (X)=(\det X) \in \mathcal{C}_f$. This proves the theorem in the case $t=m$.\par
Now let $t=m-1$. It is known (e.g. see \cite{C} and \cite{BC}) that every determinanatal ideal of a generic Hankel matrix $H$ is prime and its height is given by the following formula:

\begin{equation}\label{ht}
\hgt(I_s (H))=n-2s+2
\end{equation} 

where $n$ is the number of variables. In this case
$$\hgt (I_t(X))=2m-1-2(m-1)+2=3.$$
From equalities (\ref{eq1}), taking the sum, we get
$$ I_m(X)+I_{m-1}(Q)=(\det X,\det Q) \in \mathcal{C}_f.$$
Moreover $$ \lt (I_m(X)+I_{m-1}(Q))=( x_1 x_3 \cdots x_n, x_2 x_4 \cdots x_{n-1})$$
is a complete intersection of height 2, so $I_m(X)+I_{m-1}(Q)$ is a complete intersection of height 2 as well.\par
Now observe that $$\hgt(I_t (P_1))= (n-1)-2t+2=2=(2m-1-1)-2(m-1)+2=\hgt(I_t(P_2))$$ and $$I_t (P_1), I_t(P_2) \supseteq (\det X, \det Q)=I_{t+1} (X) + I_{t} (Q) \in \mathcal{C}_f.$$

This means that $I_t (P_1)$ and $I_t(P_2)$ must be minimal primes over the ideal $(\det X, \det Q)\in \mathcal{C}_f$. Thus, they and their sum must be in $\mathcal{C}_f$ by definition.\\
Hence $I_t(X)$ is a prime ideal of height $3$ and it contains the sum of two distinct prime ideals of height 2, namely $I_t (P_1)+I_t(P_2)$. This shows that $I_t(X) \in \mathcal{C}_f$, since it is a minimal prime over $I_t (P_1)+I_t(P_2)$ which is in $\mathcal{C}_f$.\\
In the same way, using a backward inductive argument, it can be proved that $I_t(X) \in \mathcal{C}_f$ for every $t=1, \ldots, m$. Although similar to the previous case, we think it is worth to include the proof of the general case; this might help the reader to understand better the scheme of the proof, which will be discussed later on in the paper. \par
Assume that $I_t(X), I_t(P_1), I_t(P_2), I_t(Q) \in \mathcal{C}_f$; we want to prove that the same holds for $t-1$. \par
By (\ref{ht}), we know that
$$\hgt (I_{t-1}(Q))=n-2-2(t-1)+2=n-2t+2.$$
and $$\hgt (I_{t}(P_1))= \hgt (I_t(P_2))=n-1-2t+2=n-2t+1.$$
Moreover $$I_{t-1}(Q) \supseteq I_t(P_1)+I_t(P_2)$$ 
and $I_t(P_1)+I_t(P_2) \in \mathcal{C}_f$ by induction. So $I_{t-1}(Q) $ must be minimal over $I_t(P_1)+I_t(P_2)$. This proves that $I_{t-1}(Q)\in \mathcal{C}_f $.\par
As a consequence we get that $I_t(X)+I_{t-1}(Q) \in \mathcal{C}_f$. This ideal is the sum of two distinct prime ideals of height $n-2t+2$ and it is contained in $I_{t-1}(P_1)$ and $I_{t-1}(P_2)$ which are two prime ideals of height one more, that is $n-2t+3$. Hence we have that $I_{t-1}(P_1)$ and $I_{t-1}(P_2)$ are miniaml primes over the sum $I_t(X)+I_{t-1}(Q)$ which is in $\mathcal{C}_f$ and so they must be in $\mathcal{C}_f$.
It remains to show that $I_{t-1} (X) \in \mathcal{C}_f$. To do so, one can observe that 
$$I_{t-1}(X) \supseteq I_{t-1}(P_1)+I_{t-1}(P_2) \in \mathcal{C}_f.$$

Hence $I_{t-1}(X)$ is a prime ideal of height $n-2t+4$ that contains the sum of two distinct prime ideals in $\mathcal{C}_f$ of height $n-2t+3$. Thus $I_{t-1}(X)$ must be a minimal prime over 
$I_{t-1}(P_1)+I_{t-1}(P_2) \in \mathcal{C}_f$. By definition, we get $I_{t-1}(X) \in \mathcal{C}_f$. This completes the proof.

\end{proof}

A similar result holds for Hankel matrices of size $m \times (m+1)$.

\begin{thm}\label{prophank2} Let $X=X_{m}^{(1,n)}$ be the rectangular Hankel matrix of size $m \times (m+1)$ with entries $x_1, \ldots, x_n$, where $n=2m$ and let $f$ be the polynomial  $f= \det X_{m}^{(1,n-1)} \cdot \det X_{m}^{(2,n)}$  in $S=\mathbb{K}[x_1,\ldots,x_n]$. Then $I_t(X) \in \mathcal{C}_f$ for $t=1,\ldots, m$.

\end{thm}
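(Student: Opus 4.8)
The plan is to reproduce the backward induction of the proof of Theorem~\ref{prophank}, using a four-term tower of Hankel submatrices of $X$ adapted to the $m\times(m+1)$ shape. Fix a diagonal term order $\prec$ on $S$ and let $A:=X_m^{(1,n-1)}$, $B:=X_m^{(2,n)}$ be the two maximal square submatrices of $X$ (delete the last, resp.\ the first, column). Being square Hankel of size $m$, their determinants have initial terms $x_1x_3\cdots x_{n-1}$ and $x_2x_4\cdots x_n$, so
$$\lt(f)=\lt(\det A)\cdot\lt(\det B)=\prod_{i=1}^{n}x_i$$
is squarefree and $\mathcal C_f$ is defined. The tower I work with is
$$X=X_m^{(1,n)},\qquad A=X_m^{(1,n-1)},\qquad B=X_m^{(2,n)},\qquad C=X_m^{(2,n-1)},$$
all with $m$ rows and with $m+1,m,m,m-1$ columns; thus $A$ and $B$ are column-deletions of $X$, and $C$ is obtained by deleting the first column of $A$ (equivalently the last column of $B$).

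I will use freely that every $I_t$ of a generic Hankel matrix is a prime ideal whose height is given by~(\ref{ht}), so it depends only on $t$ and on the number of variables involved; Remark~\ref{osshank}; the elementary containments $I_t(M)\subseteq I_{t-1}(M')$ whenever $M'$ is $M$ with one row or one column removed (Laplace expansion); and the fact that members of $\mathcal C_f$ are radical (Main Theorem), so that their minimal primes again lie in $\mathcal C_f$. The mechanism that drives everything --- exactly as in Theorem~\ref{prophank} --- is: \emph{if $J\in\mathcal C_f$ and $\mathfrak p$ is a Hankel determinantal prime with $J\subseteq\mathfrak p$ and $\hgt\mathfrak p=\hgt J$, then $\mathfrak p$ is a minimal prime of $J$, hence $\mathfrak p\in\mathcal C_f$}; as a companion, if $J_1,J_2\in\mathcal C_f$ are distinct primes of the same height $h$ then $J_1+J_2\in\mathcal C_f$ has height exactly $h+1$ as soon as it is contained in a Hankel prime of height $h+1$ (it is $\ge h+1$ because any minimal prime of $J_1+J_2$ strictly contains $J_1$, distinct primes of equal height being incomparable).

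For the base case $t=m$: since $S$ is a domain, $(f):(\det A)=(\det B)$ and $(f):(\det B)=(\det A)$, so $I_m(A)=(\det A)$, $I_m(B)=(\det B)$ and $(\det A,\det B)=I_m(A)+I_m(B)$ all lie in $\mathcal C_f$, the last of height $2$. Both $I_m(X)$ --- which contains $\det A$ and $\det B$ since they are two of its defining maximal minors --- and $I_{m-1}(C)$ --- which contains them by Laplace expansion of $\det A$ and $\det B$ along the deleted column --- are Hankel primes of height $2$ containing $(\det A,\det B)$, hence belong to $\mathcal C_f$; then $I_{m-1}(A)$ and $I_{m-1}(B)$, Hankel primes of height $3$ containing $I_m(X)+I_{m-1}(C)\in\mathcal C_f$, belong to $\mathcal C_f$; and then $I_{m-1}(X)$, the Hankel prime of height $4$ containing $I_{m-1}(A)+I_{m-1}(B)\in\mathcal C_f$, belongs to $\mathcal C_f$. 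For the descent, assume $I_t$ of $X,A,B,C$ all lie in $\mathcal C_f$ (with $t\le m-1$); then $I_{t-1}(C)$ is a minimal prime over $I_t(A)+I_t(B)$, next $I_{t-1}(A)$ and $I_{t-1}(B)$ are minimal primes over $I_t(X)+I_{t-1}(C)$, and finally $I_{t-1}(X)$ is a minimal prime over $I_{t-1}(A)+I_{t-1}(B)$; at each of these three steps the height goes up by exactly $1$ by~(\ref{ht}), and the required containments are instances of $I_t(M)\subseteq I_{t-1}(M')$. Iterating down from $t=m-1$ gives $I_t(X)\in\mathcal C_f$ for all $t=1,\dots,m$.

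I expect the main obstacle to be purely organizational: verifying that the tower $X,A,B,C$ really does make every containment and every height increment match up, and in particular that each sum $J_1+J_2$ appearing above jumps in height by exactly one --- which, via the mechanism of the previous paragraph, comes down to checking that its two summands are distinct primes (an incomparable-supports argument) of equal height sitting inside the next member of the tower. One also has to handle the degenerate top of the range separately, since $C$ has no $m$-minors, so $I_m(C)$ does not exist and $I_{m-1}(C)$ must be extracted directly from the complete intersection $(\det A,\det B)$; this is the analogue of the degeneration of $P_1,P_2$ at $t=m$ in Theorem~\ref{prophank}. Apart from this, every step is an application of~(\ref{ht}), of the minimal-prime mechanism, or of Remark~\ref{osshank}, and involves no computation beyond what is already in the proof of Theorem~\ref{prophank}.
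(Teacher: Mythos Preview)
Your proof is correct and follows precisely the approach the paper has in mind: the paper's own proof merely defines $P_1=X_m^{(1,n-1)}$, $P_2=X_m^{(2,n)}$, $Q=X_m^{(2,n-1)}$ (your $A,B,C$) and declares the argument ``similar to that of the case of square Hankel matrices''. You have faithfully carried out that similarity, including the correct handling of the top of the tower where $Q$ has no $m$-minors and one must start from the complete intersection $(\det P_1,\det P_2)$ instead.
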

\begin{proof}
In this case we define

\begin{align*}
&P_1= X_{m }^{(1,n-1)}&& \text{: square matrix obtained by dropping the last column of X}\\
&P_2= X_{m}^{(2,n)} && \text{: square matrix obtained by dropping the first column of X}\\
&Q= X_{m }^{(2,n-1)}&&\text{: rectangular matrix obtained by dropping the first and the }\\
& && \quad \text{last column of X.}
\end{align*}

Then the proof is similar to that of the case of square Hankel matrices.
\end{proof}

From the previous theorems, we can derive an alternative proof of \cite[Theorem 4.1]{CMSV}.
\begin{cor} \label{corhank} Let $H$ be a generic Hankel matrix of size $r\times s$. Then \\
\emph{(a)} $I_t(H)$ is a Knutson ideal for every $t \leq \min (r,s)$.\\
\emph{(b)} If $\mathbb{K}$ is a field of positive characteristic, then $S/I_t(H)$ is F-pure.
\end{cor}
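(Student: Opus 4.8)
The plan is to deduce Corollary~\ref{corhank} from Theorems~\ref{prophank} and \ref{prophank2} together with the Main Theorem of Section~2. For part (a), first I would reduce the case of a general Hankel matrix $H$ of size $r \times s$ to the two cases already treated. By Remark~\ref{osshank}, $I_t(H) = I_t(X_t^{(1,n)})$ where $n$ is the number of variables appearing in $H$, so $I_t(H)$ only depends on $t$ and $n$; in particular I may replace $H$ by a $t \times (n-t+1)$ Hankel matrix. Now $t \times (n-t+1)$ with $t \le n-t+1$ is a rectangular (or square) Hankel matrix, and by possibly enlarging to a square matrix of size $\max(t, n-t+1)$ or to a matrix of size $m \times (m+1)$ — again invoking Remark~\ref{osshank} to see the ideal is unchanged — I land exactly in the hypotheses of Theorem~\ref{prophank} (square case, $n = 2m-1$) or Theorem~\ref{prophank2} (rectangular case, $n = 2m$). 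In either case $I_t(H) \in \mathcal{C}_f$ for the corresponding choice of $f$, hence $I_t(H)$ is a Knutson ideal.

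For part (b), I would invoke the characterization of $F$-purity via initial ideals. Assume $\mathbb{K}$ has positive characteristic. By part (a), $I_t(H) \in \mathcal{C}_f$ for a suitable $f$ with $\lt(f) = \prod_i x_i$ squarefree (this is exactly the $f$ exhibited in the proofs of Theorems~\ref{prophank} and \ref{prophank2}, whose leading term is checked there to be the product of all the variables). By Proposition~\ref{pp1} (equivalently the positive-characteristic part of the Main Theorem), $\lt(I_t(H))$ is a squarefree monomial ideal. A standard result — that if $\init_\prec(I)$ is squarefree then $S/I$ is $F$-pure, which follows from the fact that $S/\init_\prec(I)$ is $F$-pure (being a Stanley–Reisner ring, as squarefree monomial quotients are $F$-pure) and $F$-purity deforms well under Gröbner degeneration — then gives that $S/I_t(H)$ is $F$-pure.

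The one point requiring a little care is the reduction in part (a): one must make sure that passing from an arbitrary $r \times s$ Hankel matrix to a square one of size $2m-1$ (or to one of size $m \times (m+1)$ with $n = 2m$) genuinely leaves $I_t$ unchanged. The key is that $I_t(X_m^{(1,n)})$ depends only on $t$ and $n$ whenever $t \le m \le n+1-t$ (Remark~\ref{osshank}), so one should first normalize to the $t \times (n-t+1)$ shape where $t$ equals the small dimension, then choose $m = \lceil (n+1)/2 \rceil$ so that $2m-1 = n$ or $2m = n$ as parity dictates, checking that $t \le m \le n+1-t$ holds throughout. This bookkeeping is the only real obstacle; everything else is a direct appeal to the theorems already proved.

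\begin{proof}[Proof of Corollary~\ref{corhank}]
(a) Let $H$ be a generic Hankel matrix of size $r \times s$ with entries among $x_1, \ldots, x_n$, and fix $t \le \min(r,s)$. By Remark~\ref{osshank}, $I_t(H) = I_t(X_t^{(1,n)})$, so it suffices to treat $I_t(X_t^{(1,n)})$, which depends only on $t$ and $n$. Set $m := \lceil (n+1)/2 \rceil$; then $t \le m \le n+1-t$ (using $2t \le n+1$, which holds since $X_t^{(1,n)}$ has $n-t+1 \ge t$ columns), so by Remark~\ref{osshank} again, $I_t(X_t^{(1,n)}) = I_t(X_m^{(1,n)})$. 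If $n = 2m-1$, Theorem~\ref{prophank} gives $I_t(X_m^{(1,n)}) \in \mathcal{C}_f$ with $f = \det X_m^{(1,n)} \cdot \det X_{m-1}^{(2,n-1)}$; if $n = 2m$, Theorem~\ref{prophank2} gives $I_t(X_m^{(1,n)}) \in \mathcal{C}_f$ with $f = \det X_m^{(1,n-1)} \cdot \det X_m^{(2,n)}$. In both cases $I_t(H)$ is a Knutson ideal.

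(b) Assume $\mathbb{K}$ has positive characteristic. By part (a), $I_t(H) \in \mathcal{C}_f$ for the polynomial $f$ above, whose leading term is the squarefree monomial $\prod_{i=1}^n x_i$ (as computed in the proofs of Theorems~\ref{prophank} and \ref{prophank2}). By Proposition~\ref{pp1}, $\lt(I_t(H))$ is squarefree. Hence $S/\lt(I_t(H))$ is a Stanley–Reisner ring, which is $F$-pure, and since $F$-purity passes from $S/\lt(I)$ to $S/I$, we conclude that $S/I_t(H)$ is $F$-pure.
\end{proof}
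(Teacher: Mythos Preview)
Your argument for part (a) is correct and is essentially the paper's proof, only with the bookkeeping (the choice $m=\lceil (n+1)/2\rceil$ and the verification $t\le m\le n+1-t$) made explicit. The paper simply invokes Remark~\ref{osshank} to reduce to the square or $m\times(m+1)$ case and then applies Theorems~\ref{prophank} and \ref{prophank2}.

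For part (b), however, your approach diverges from the paper's and contains a genuine gap. You assert as ``a standard result'' that $F$-purity passes from $S/\lt(I)$ to $S/I$, justified by the claim that ``$F$-purity deforms well under Gr\"obner degeneration''. But $F$-purity is known \emph{not} to deform in general (Fedder, Singh): there exist local rings $R$ with a nonzerodivisor $t$ such that $R/tR$ is $F$-pure while $R$ is not. The Gr\"obner degeneration places $S/\lt(I)$ at the special fibre and $S/I$ at the general one, so your implication would require exactly this failing deformation statement for the total space. Even if something special happens in the squarefree-initial-ideal situation, this is not a standard fact one can simply quote, and you give no argument for it.

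The paper avoids this entirely. It first reduces to a perfect field (tensoring with $\overline{\mathbb{K}}$, noting that $F$-purity descends), and then uses Knutson's Lemma~4: the map $\Tr(f^{p-1}\,\bullet)$ is a Frobenius splitting of $S$ that compatibly splits $(f)$, and hence compatibly splits every ideal in $\mathcal{C}_f$. Since $I_t(H)\in\mathcal{C}_f$ by part (a), this splitting of $S$ descends to a Frobenius splitting of $S/I_t(H)$, so $S/I_t(H)$ is $F$-split and therefore $F$-pure. The point is that membership in $\mathcal{C}_f$ gives much more than a squarefree initial ideal: it provides an \emph{explicit} splitting, and that is what carries the argument.
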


\begin{proof}
(a) Using Remark \ref{osshank}, we may assume that the Hankel matrix  $H$ has the right size (that is $m {\times}m$ or  $m{\times} (m+1)$), so we can apply  Theorem \ref{prophank} or Theorem \ref{prophank2}. \par
(b) We may assume that $\mathbb{K}$ is a perfect field of positive characteristic. In fact, we can always reduce to this case by tensoring with the algebraic closure of $\mathbb{K}$ and the $F$-purity property descends to the non-perfect case. Using Lemma 4 in \cite{Kn}, we know that the ideal $(f)$ is compatibly split with respect to the Frobenius splitting defined by $\Tr (f^{p-1}\bullet)$ (where $f$ is taken to be as in the previous theorems). Thus all the ideals belonging to $\mathcal{C}_f$ are compatibly split with respect to the same splitting, in particular $I_t(H)$. This implies that such Frobenius splitting of $S$ provides a Frobenius splitting of $S/I_t (H)$. Being $S/I_t(H)$ $F$-split, it must be also $F$-pure.
\end{proof}

Proving Theorem \ref{prophank}, it comes out that determinantal ideals of certain submatrices of Hankel matrices are Knutson ideals. Since we know that $\mathcal{C}_f$ is finite, it is natural to ask whether they are all the ideals belonging to the family or not.\\

The only way to construct new ideals in $\mathcal{C}_f$ starting from two ideals belonging to the family is taking their sums, their intersections and their minimal primes. So we have to control that in the algorithm we used to prove Theorem \ref{prophank} we take all possible sums, intersections and minimal primes of ideals in $\mathcal{C}_f$.\par
The previous algorithm proceeds according to the scheme below:

\scriptsize
 \begin{center}
 \begin{tikzcd} 
&& \color{blue}{\hgt =2}\arrow[bend right=30,swap]{ld}{}& I_{m-1}(P_1),I_{m-1}(P_2)\arrow[dash,r, "\scriptsize\parbox{1cm}{minimal primes}"] \arrow[dash]{d}{+}  & I_m(X)+ I_{m-1}(Q)\\ 
 &\color{blue}{\hgt =3}\arrow[bend right=30,swap]{ld}& I_{m-1}(X),I_{m-2}(Q)\arrow[dash,r, "\scriptsize\parbox{1cm}{minimal primes}"]\arrow[dash]{d}{+}& I_{m-1}(P_1)+I_{m-1}(P_2)& \\
 \color{blue}{\hgt =4}& I_{m-2}(P_1),I_{m-2}(P_2)\arrow[dash,r, "\scriptsize\parbox{1cm}{minimal primes}"] \arrow[dash]{d}{+}& I_{m-1}(X)+I_{m-2}(Q)& &\\
  &\vdots  & & &
\end{tikzcd}
 \end{center}

\normalsize
Since two ideals of different height in the scheme are always contained one into the other, if we take their intersection or sum we do not obtain a new ideal. Moreover all the ideals of type $I_t(P_1),I_t(P_2), I_t(X), I_t(Q)$ are prime ideals, so  they are (the only) minimal primes over themselves. If we show that at each step there are no other minimal primes, it turns out that the ideals given by the above procedure are all the possible ideals belonging to the family $\mathcal{C}_f$, that is:\par
\begin{thm}\label{thm: car}
Let $X=X_m^{(1,n)}$ be the square Hankel matrix of size $m$ with entries $x_1, \ldots, x_n$ and let $f= \det X \cdot \det X_{m-1}^{(2,n-1)} \in S=\mathbb{K}[x_1,\ldots,x_n]$. Then the only ideals belonging to $\mathcal{C}_f$ are those of the form
$$I_t(P_1),I_t(P_2), I_t(X), I_t(Q),I_t(X)+ I_{t-1}(Q),I_{t-1}(P_1)+I_{t-1}(P_2).$$
\end{thm}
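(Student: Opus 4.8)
The containment ``$\supseteq$'' is already contained in the proof of Theorem \ref{prophank}, which exhibits each ideal of the stated form as a member of $\mathcal{C}_f$. For ``$\subseteq$'' the plan is to check that the stated list is closed under the three operations generating $\mathcal{C}_f$, after enlarging it by two unavoidable extra ideals: since $(f):(0)=S$ and, the two factors of $f$ being distinct irreducibles of the UFD $S$, $(f)=(\det X)\cap(\det Q)=I_m(X)\cap I_{m-1}(Q)$, both $S$ and $(f)$ lie in $\mathcal{C}_f$. Write $\mathcal{L}$ for the stated list together with these two ideals. Since $\mathcal{C}_f$ is by definition the smallest family containing $(f)$ and closed under colons, sums and intersections, it suffices to prove that $\mathcal{L}$ is closed under those operations; then $\mathcal{C}_f\subseteq\mathcal{L}$, and together with ``$\supseteq$'' we conclude $\mathcal{C}_f=\mathcal{L}$.

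First I would record the inclusion structure of $\mathcal{L}$. Expanding minors along the distinguished row or column relating $X,P_1,P_2,Q$ gives, for $1\le t\le m-1$ and $j\in\{1,2\}$,
\[ I_{t+1}(X)+I_t(Q)\ \subseteq\ I_t(P_1)\cap I_t(P_2)\ \subseteq\ I_t(P_j)\ \subseteq\ I_t(P_1)+I_t(P_2)\ \subseteq\ I_t(X)\cap I_{t-1}(Q)\ \subseteq\ I_t(X),\,I_{t-1}(Q), \]
and the two ``tops'' $I_t(X),I_{t-1}(Q)$ of this block lie inside the ``bottom'' $I_t(X)+I_{t-1}(Q)$ of the block with parameter $t-1$; hence the blocks form a chain, and the only incomparable pairs in $\mathcal{L}$ are $\{I_t(P_1),I_t(P_2)\}$ ($1\le t\le m-1$) and $\{I_t(X),I_{t-1}(Q)\}$ ($2\le t\le m$). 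For comparable pairs the sum and the intersection are the larger and the smaller ideal, hence in $\mathcal{L}$; for the incomparable pairs the sums are by construction the listed ideals $I_t(P_1)+I_t(P_2)$ and $I_t(X)+I_{t-1}(Q)$. Moreover every member of $\mathcal{L}$ is radical — the four families of determinantal ideals are prime (see \cite{C}, \cite{BC}), the sums lie in $\mathcal{C}_f$ so are radical by Proposition \ref{pp1}/\ref{car0}, and $(f)$ is a product of distinct irreducibles — so, by the observation in Section 1 that for a radical ideal $I$ the minimal primes of $I:J$ are exactly those minimal primes of $I$ not containing $J$, each colon $I:J$ is an intersection of minimal primes of $I$. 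Hence, once the minimal primes of each member of $\mathcal{L}$ are known, closure of $\mathcal{L}$ under colons and under the remaining (incomparable-pair) intersections is automatic.

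Everything therefore reduces to identifying the minimal primes of the three ``sum'' ideals, i.e. to proving that for $1\le t\le m-1$
\[ I_t(P_1)+I_t(P_2)=I_t(X)\cap I_{t-1}(Q),\qquad I_{t+1}(X)+I_t(Q)=I_t(P_1)\cap I_t(P_2), \]
together with $I_m(X)\cap I_{m-1}(Q)=(f)$. The last is clear; in the other two ``$\subseteq$'' is part of the chain above, and the two primes on each right-hand side are genuine minimal primes of the left-hand side by the height formula (\ref{ht}), so what must be excluded is any \emph{further} minimal prime. This is the hard part, and my plan is to pass to initial ideals for the diagonal term order. The Gröbner-basis property of Knutson ideals — $\lt(I\cap J)=\lt(I)\cap\lt(J)$ for ideals in $\mathcal{C}_f$ (Remark \ref{finite}) and the equivalence with $\lt(I+J)=\lt(I)+\lt(J)$ used in Section 2 — turns the two equalities into identities between initial ideals; and since the $s$-minors of a generic Hankel matrix form a Gröbner basis for the diagonal order (classical; cf. \cite{C}), each relevant initial ideal is a squarefree monomial ideal whose minimal generators are, using Remark \ref{osshank}, the products $x_{a_1}\cdots x_{a_s}$ over $s$-element subsets $\{a_1<\dots<a_s\}$ with $a_{k+1}\ge a_k+2$ lying in $[1,n]$ for $X$, in $[2,n-1]$ for $Q$, in $[1,n-1]$ for $P_1$, and in $[2,n]$ for $P_2$. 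The remaining monomial computation amounts to the elementary fact that a non-consecutive $s$-subset of $[1,n]$ omitting an endpoint is the same thing as one contained in $[1,n-1]$ or in $[2,n]$, plus the analogous bookkeeping for $Q$; the boundary indices $t=1$ (where $I_1(P_1)+I_1(P_2)=I_1(X)=\mathfrak{m}$ and $I_0(Q)=S$) and $t=m$ (where $I_m(X)+I_{m-1}(Q)=(\det X,\det Q)$ is a complete intersection and $I_m(P_j)=0$) need separate, easy treatment. Once these identities hold, the minimal primes of every ideal of $\mathcal{L}$ are known, $\mathcal{L}$ is closed, and the theorem follows. An alternative to the initial-ideal route would be to prove that the two sums are Cohen--Macaulay, hence unmixed, and then match the degrees of the corresponding varieties against the known degrees of Hankel determinantal varieties; but the combinatorial argument seems cleaner, and in any case this ``no further minimal prime'' statement is the crux on which the whole theorem rests.
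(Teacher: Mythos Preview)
Your reduction of the theorem to the two primary decompositions
\[
I_{t+1}(X)+I_t(Q)=I_t(P_1)\cap I_t(P_2),\qquad I_t(P_1)+I_t(P_2)=I_t(X)\cap I_{t-1}(Q)
\]
is exactly the paper's strategy (this is Proposition \ref{primaryDec}), and your argument that closure of $\mathcal{L}$ under colons and intersections follows once the minimal primes of each member are known is, if anything, more carefully stated than the paper's own discussion preceding that proposition.

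Where you diverge is in how you propose to prove the decompositions. The paper follows precisely the route you label the ``alternative'': using Peskine--Szpiro linkage (Proposition \ref{PS}) inductively along the chain, each sum $I_k+J_k$ is shown to be Gorenstein of the expected height, hence unmixed; then the $h$-vectors of both sides are computed explicitly from the Eagon--Northcott resolution and the short exact sequence, the multiplicities are matched, and equality follows from Lemma \ref{lemmino}. A by-product is that every sum in the list is Gorenstein with an explicit unimodal $h$-vector.

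Your primary route through initial ideals is in principle sound: all four building-block ideals lie in $\mathcal{C}_f$, so $\lt$ commutes with the relevant sums and intersections, and equality of squarefree initial ideals forces equality of the ideals. But the monomial identity you need is not the tautology you state. Your one line covers only the case where the non-consecutive $t$-subset $B$ witnessing membership in $\lt(I_t(X))$ already omits an endpoint. When $B$ contains \emph{both} $1$ and $n$, you must combine $B$ with the $(t-1)$-subset $C\subseteq[2,n-1]$ coming from $\lt(I_{t-1}(Q))$ to manufacture a non-consecutive $t$-subset avoiding an endpoint, and this forces a genuine case analysis (on whether $2,n-1\in C$, whether $3,n-2\in B$, and so on); the companion identity has the same difficulty. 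The statement is true --- it follows a posteriori from Proposition \ref{primaryDec} --- but it does not reduce to the ``elementary fact'' you cite, so as written there is a gap at precisely the point you correctly identify as the crux. If you want the combinatorial route to stand on its own, that case analysis must be supplied; otherwise the multiplicity/linkage argument is what actually closes the proof.
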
 
By the above discussion, to prove Theorem \ref{thm: car}, it is enough to prove the following:\par 
\begin{prop}\label{primaryDec} With the notation introduced before, we get the following primary decompositions:
\item[1.]$I_t(X)+ I_{t-1}(Q)=I_{t-1}(P_1) \cap I_{t-1}(P_2)$
\item[2.] $I_{t-1}(P_1)+I_{t-1}(P_2)=I_{t-1}(X) \cap I_{t-2}(Q).$
  
\end{prop}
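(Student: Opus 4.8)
The plan is to reduce both identities to equalities of squarefree monomial ideals under the diagonal term order, and then verify the monomial identities by an elementary combinatorial argument. Throughout I fix the diagonal term order $\prec$ from the proof of Theorem~\ref{prophank}. First I record the easy inclusions: expanding a $t$-minor of $X$ along the deleted row (resp.\ column) gives $I_t(X)\subseteq I_{t-1}(P_i)$, and since $Q$ is a column- (resp.\ row-) submatrix of $P_i$ one gets $I_{t-1}(Q)\subseteq I_{t-1}(P_i)$, hence $I_t(X)+I_{t-1}(Q)\subseteq I_{t-1}(P_1)\cap I_{t-1}(P_2)$; dually $I_{t-1}(P_i)\subseteq I_{t-1}(X)$ and, expanding along the deleted column (resp.\ row), $I_{t-1}(P_i)\subseteq I_{t-2}(Q)$, so $I_{t-1}(P_1)+I_{t-1}(P_2)\subseteq I_{t-1}(X)\cap I_{t-2}(Q)$. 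Every ideal occurring here, and every sum and intersection of them that I form, lies in $\mathcal{C}_f$ by the construction in the proof of Theorem~\ref{prophank}; hence by the Main Theorem all these initial ideals are squarefree, and then by Remark~\ref{finite} together with the equivalence $\lt(I\cap J)=\lt I\cap\lt J\Leftrightarrow\lt(I+J)=\lt I+\lt J$ recalled in the introduction, $\init$ commutes with exactly these sums and intersections. Since all the ideals are homogeneous and $\init$ preserves Hilbert functions, an inclusion $L\subseteq R$ with $\init L=\init R$ forces $L=R$. So it suffices to prove the two identities after passing to initial ideals.

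For an interval $W\subseteq\{1,\dots,n\}$ and an integer $s\ge 0$ let $\mathcal{M}_s(W)$ be the monomial ideal generated by the squarefree monomials $x_{a_1}\cdots x_{a_s}$ with $a_1<\cdots<a_s$ in $W$ and $a_{k+1}\ge a_k+2$ for all $k$ (so $\mathcal{M}_0(W)=S$). Using Remark~\ref{osshank} to replace each Hankel matrix by the one with $s$ rows having the same ideal of $s$-minors, and reading the initial term of a minor as the product of its diagonal entries, one checks that every sparse index sequence in the relevant interval is realized, and obtains $\init I_t(X)=\mathcal{M}_t(\{1,\dots,n\})$, $\init I_{t-1}(Q)=\mathcal{M}_{t-1}(\{2,\dots,n-1\})$, $\init I_{t-1}(P_1)=\mathcal{M}_{t-1}(\{1,\dots,n-1\})$, $\init I_{t-1}(P_2)=\mathcal{M}_{t-1}(\{2,\dots,n\})$, and $\init I_{t-2}(Q)=\mathcal{M}_{t-2}(\{2,\dots,n-1\})$. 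Thus the two statements to prove become
$$\mathcal{M}_t(\{1,\dots,n\})+\mathcal{M}_{t-1}(\{2,\dots,n-1\})=\mathcal{M}_{t-1}(\{1,\dots,n-1\})\cap\mathcal{M}_{t-1}(\{2,\dots,n\})$$
and
$$\mathcal{M}_{t-1}(\{1,\dots,n-1\})+\mathcal{M}_{t-1}(\{2,\dots,n\})=\mathcal{M}_{t-1}(\{1,\dots,n\})\cap\mathcal{M}_{t-2}(\{2,\dots,n-1\}).$$

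Both are then checked monomial by monomial. For a finite set $C\subseteq\mathbb{Z}$ let $\mu(C)$ denote the largest cardinality of a subset of $C$ with consecutive gaps $\ge 2$; equivalently $\mu(C)=\sum_i\lceil\ell_i/2\rceil$, the sum being over the maximal runs of consecutive integers in $C$, of lengths $\ell_i$. A squarefree monomial with support $A$ lies in $\mathcal{M}_s(W)$ iff $\mu(A\cap W)\ge s$. The facts about $\mu$ that I will use are: $C'\subseteq C\Rightarrow\mu(C')\le\mu(C)$; $\mu(C)-1\le\mu(C\setminus\{x\})\le\mu(C)$; and for $B\subseteq\{2,\dots,n-1\}$ with $B\ne\{2,\dots,n-1\}$ the additivity $\mu(B\cup\{1,n\})=\mu(B)+\varepsilon_1+\varepsilon_2$, where $\varepsilon_i\in\{0,1\}$ are $\varepsilon_1=\mu(B\cup\{1\})-\mu(B)$ and $\varepsilon_2=\mu(B\cup\{n\})-\mu(B)$ (the point being that $1$ interacts only with the run of $B$ at $2$ and $n$ only with the run at $n-1$, and these runs are distinct unless $B=\{2,\dots,n-1\}$). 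Granting these, the inclusions $\subseteq$ in the two displays are immediate from monotonicity and the $\pm1$-bound, and the inclusions $\supseteq$ split into: the easy case $1\notin A$ or $n\notin A$; the case $A\cap\{2,\dots,n-1\}=\{2,\dots,n-1\}$, forcing $A=\{1,\dots,n\}$, where the required membership in the left-hand sum reduces to $\lceil n/2\rceil\ge t$, which holds because $n=2m-1$ and $t\le m$; and the remaining case, where the additivity formula combined with $\varepsilon_i\in\{0,1\}$ shows that if neither $\varepsilon_i=1$ already yields membership in the $\{2,\dots,n-1\}$-summand, then $\varepsilon_1=\varepsilon_2=1$ and the sparse degree rises by $2$, giving membership in the other summand.

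The main obstacle is exactly this last combinatorial step, i.e.\ the $\supseteq$ inclusions of the monomial identities: one has to control how $\mu$ changes when the two boundary indices $1$ and $n$ are simultaneously in the support, and it is precisely the oddness of $n$ (that is, $n=2m-1$) together with the bound $t\le m$ that kill the only configuration, $A=\{1,\dots,n\}$, in which the naive count would fail. Finally, since $I_{t-1}(P_1)\ne I_{t-1}(P_2)$ and $I_{t-1}(X)\ne I_{t-2}(Q)$ (distinctness is visible already on the initial ideals $\mathcal{M}$ above) while all of these are prime by the cited results on generic Hankel matrices, each right-hand side is an intersection of two distinct primes, so the two identities are the asserted primary decompositions, and Theorem~\ref{thm: car} then follows as explained before the statement.
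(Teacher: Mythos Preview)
Your approach is correct and genuinely different from the paper's.  The paper proceeds by an inductive linkage argument: setting $I_k,J_k$ as in its proof, it shows (via Peskine--Szpiro, Proposition~\ref{PS}) that each $I_k+J_k$ is Gorenstein of height $k+1$, hence unmixed, and then computes the multiplicities of $I_k+J_k$ and $I_{k+1}\cap J_{k+1}$ explicitly from the $h$-vectors coming from the Eagon--Northcott resolution, concluding equality via Lemma~\ref{lemmino}.  This route yields as byproducts the Gorensteinness of all the sums and their explicit (unimodal) $h$-vectors.  Your route instead exploits that every ideal in sight already lies in $\mathcal{C}_f$ by Theorem~\ref{prophank}, so that (by Remark~\ref{finite} and the $\cap$/$+$ equivalence recalled in the Introduction) passing to initial ideals commutes with the sums and intersections at hand; you then verify the resulting identities of squarefree monomial ideals by an elementary count with the function $\mu$.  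This is shorter and more combinatorial, but it does not recover the Gorenstein and $h$-vector information, and it silently imports one nontrivial input: that the $t$-minors of a Hankel matrix form a Gr\"obner basis for the diagonal order, so that $\init I_t(X)$ \emph{equals} $\mathcal{M}_t(\{1,\dots,n\})$ rather than merely containing it.  This is established in~\cite{C} and should be cited at that step.  One small slip in your case analysis for the first identity: it is $\varepsilon_i=0$ (not $\varepsilon_i=1$) that gives $\mu(B)=\mu(B\cup\{i\text{-th endpoint}\})\ge t-1$ and hence membership in the $\{2,\dots,n-1\}$-summand; with that correction the dichotomy reads ``either some $\varepsilon_i=0$, or $\varepsilon_1=\varepsilon_2=1$ and $\mu(A)=\mu(B)+2\ge t$'', exactly as you intend.
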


The inclusion $\subseteq$ is obvious in both cases. It remains to prove the reverse inclusion. To do so we will apply the following result which is a consequence of \cite[Corollary 4.6.8]{BH}.

\begin{lem}\label{lemmino}
Let $I,J$ be two ideals in a polynomial ring $S$ such that the following conditions hold:
\begin{enumerate}[topsep=0.4pt,itemsep=0.2pt]
\item $\hgt(I)=\hgt(J)=:h$
\item $I \subseteq J$
\item $\hgt (P)=h \quad \forall P \in \Ass(I)$
\end{enumerate}
then
$$ e(S/I)=e(S/J)\Rightarrow I=J.$$
\end{lem}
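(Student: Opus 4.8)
The plan is to deduce the statement from the behaviour of the Hilbert–Samuel multiplicity under surjections, which is the content of the cited \cite[Corollary 4.6.8]{BH}: on a short exact sequence $0\to K\to M\to N\to 0$ of finitely generated graded modules with $\dim M=\dim N=:d$, one has $e(M)=e(N)$ if and only if $\dim K<d$ (the leading coefficients of the Hilbert polynomials are positive, so the contribution of $K$ in degree $d-1$ cannot cancel).

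First I would set $d:=\dim S/I$. Since $S$ is a polynomial ring over a field, hence Cohen–Macaulay and equidimensional, we have $\dim S/I=n-h=\dim S/J=d$ by hypothesis (1). Hypothesis (2) gives a surjection of graded $S$-modules $S/I\twoheadrightarrow S/J$ whose kernel is $J/I$, i.e. a short exact sequence $0\to J/I\to S/I\to S/J\to 0$. Applying the multiplicity statement above, $e(S/I)=e(S/J)$ if and only if $\dim(J/I)<d$.

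Now suppose $e(S/I)=e(S/J)$; I want to conclude $J/I=0$. Assume not. Then $J/I$ is a nonzero finitely generated $S$-module, so $\Ass(J/I)\neq\emptyset$; pick $P\in\Ass(J/I)$. Since $J/I$ is a submodule of $S/I$ we have $\Ass(J/I)\subseteq\Ass(S/I)=\Ass(I)$, so hypothesis (3) forces $\hgt P=h$, hence $\dim S/P=n-h=d$. But then $\dim(J/I)\geq\dim S/P=d$, contradicting the previous paragraph. Therefore $J/I=0$, i.e. $I=J$.

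The argument is short; the only point that requires care is recognizing that hypothesis (3) — the unmixedness of $I$ — is exactly what forbids the kernel $J/I$ from being supported on an embedded prime of $I$ of height $>h$; without (3) one could only conclude $\dim(J/I)<d$, which need not force $J/I=0$. I do not expect a genuine obstacle beyond invoking the precise form of the multiplicity statement in \cite{BH}. Alternatively, one can argue through the associativity formula $e(S/I)=\sum_{\hgt P=h}\ell_{S_P}\big((S/I)_P\big)\,e(S/P)$: by $I\subseteq J$ the (finitely many) terms for $J$ are obtained from those for $I$ with $\ell_{S_P}\big((S/I)_P\big)\geq\ell_{S_P}\big((S/J)_P\big)$, so equality of the sums forces $IS_P=JS_P$ for every minimal prime $P$ of $I$ of height $h$, and then $I=J$ is recovered from the primary decomposition $I=\bigcap_{P\in\Min(I)}(IS_P\cap S)$ of the unmixed ideal $I$.
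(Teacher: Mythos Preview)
Your proof is correct. The paper does not actually prove this lemma: it merely states it with the remark that it ``is a consequence of \cite[Corollary 4.6.8]{BH}'' and then moves on to apply it. Your argument via the short exact sequence $0\to J/I\to S/I\to S/J\to 0$, using additivity of multiplicity to force $\dim(J/I)<d$ and then the unmixedness hypothesis to force $J/I=0$, is exactly the standard way to extract the lemma from the cited source; the alternative route you sketch through the associativity formula is equally valid and is in fact the more direct reading of \cite[Corollary~4.6.8]{BH}. One small remark: you tacitly treat $I$ and $J$ as homogeneous (speaking of graded modules and Hilbert polynomials), which the lemma as stated does not assume; this is harmless here since every application in the paper is to homogeneous determinantal ideals, but if you wanted a statement in full generality you would phrase the argument locally at the irrelevant maximal ideal.
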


Furthermore, in the proof of Proposition \ref{primaryDec} we will need to apply recursively a result by Peskine e Szpiro (see Proposition \ref{PS}) to prove that the ideals $I_{t-1}(P_1) +I_{t-1}(P_2)$ and $I_t(X)+I_{t-1}(Q)$ are Gorenstein for every $t=1, \ldots,m$ and that $$\hgt(I_{t-1}(P_1) +I_{t-1}(P_2))=\hgt(I_{t}(X) +I_{t-1}(Q))+1.$$ 
By the purity of Macaulay, this will imply that all the three conditions of Lemma \ref{lemmino} are staisfied.\par
\begin{prop}\cite[Remark 1.4]{PS}\label{PS}
Let $I$ and $J$ be two homogeneous ideals in a polynomial ring $S$ with no associated primes in common and suppose that $S/(I \cap J)$ is Gorenstein. Then:
\begin{enumerate}
\item $S/I$ is Cohen-Macaulay if and only if $S/J$ is Cohen-Macaulay.
\item If $S/I$ is Cohen-Macaulay, then $\faktor{S}{(I+J)}$ is Gorenstein and $$\hgt(I+J)=\hgt(I)+1.$$
\end{enumerate}
\end{prop}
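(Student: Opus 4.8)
The plan is to read this off the theory of algebraic linkage: under the hypotheses, $I$ and $J$ are \emph{linked through the Gorenstein ideal} $\mathfrak{c}:=I\cap J$. The engine is the Mayer--Vietoris short exact sequence $0\to S/(I\cap J)\to S/I\oplus S/J\to S/(I+J)\to0$ (the diagonal map, followed by the difference map) together with duality over the polynomial ring $S$, equivalently over the Gorenstein quotient $R:=S/(I\cap J)$: since $\omega_{R}\cong R$ up to a degree shift, graded local duality over $R$ gives $\Hom_{R}(M,\omega_{R})\cong\omega_{M}$ for every finitely generated $R$-module $M$ with $\dim M=\dim R$, and $\Hom_{R}(M,\omega_{R})=0$ when $\dim M<\dim R$.

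First I would pin down the codimensions. From $V(I\cap J)=V(I)\cup V(J)$ we get $\Min(I\cap J)=\Min(I)\cup\Min(J)$, and as $S/(I\cap J)$ is Gorenstein, hence Cohen--Macaulay and equidimensional, all these minimal primes have a single height $h:=\hgt(I\cap J)$; thus $\hgt I=\hgt J=h$ and $\dim S/I=\dim S/J=\dim R=:d$. The hypothesis that $I$ and $J$ have no common associated (in particular no common minimal) prime then forces every prime $\mathfrak{q}\supseteq I+J$ to contain $\mathfrak{p}_1+\mathfrak{p}_2$ with distinct $\mathfrak{p}_1\in\Min I$, $\mathfrak{p}_2\in\Min J$ of height $h$, whence $\mathfrak{q}\supsetneq\mathfrak{p}_1$ and $\hgt\mathfrak{q}\ge h+1$; so $\dim S/(I+J)\le d-1$ and $\Hom_{R}(S/(I+J),\omega_{R})=0$. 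Writing $\mathfrak{a}:=I/(I\cap J)$ and $\mathfrak{b}:=J/(I\cap J)$ in $R$, we have $\mathfrak{a}\cap\mathfrak{b}=0$, hence $\mathfrak{a}\mathfrak{b}=0$ and $\mathfrak{b}\subseteq(0:_{R}\mathfrak{a})$; the reverse inclusion $(0:_{R}\mathfrak{a})=\mathfrak{b}$ amounts to $(J:I)=J$, which holds because no associated prime of $S/J$ can contain $I$ --- such a prime would strictly contain a minimal prime of $I$ and so would have height $\ge h+1$, impossible for an unmixed $S/J$ of codimension $h$ --- and symmetrically $(0:_{R}\mathfrak{b})=\mathfrak{a}$. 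Hence $\omega_{S/I}\cong(0:_{R}\mathfrak{a})=J/(I\cap J)$ and $\omega_{S/J}\cong(0:_{R}\mathfrak{b})=I/(I\cap J)$ as $R$-modules, up to degree shifts.

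For the conclusions I would dualize the natural presentations. Applying $\Hom_{R}(-,\omega_{R})$ to $0\to\mathfrak{a}\to R\to S/I\to0$, and using $\Ext^{i}_{R}(S/I,\omega_{R})=0$ for $i>0$ when $S/I$ is Cohen--Macaulay together with $\omega_{R}\cong R$, one obtains a short exact sequence $0\to\mathfrak{b}\to R\to\Hom_{R}(\mathfrak{a},\omega_{R})\to0$ with $\Ext^{i}_{R}(\mathfrak{a},\omega_{R})=0$ for $i\ge1$; comparing with $0\to\mathfrak{b}\to R\to S/J\to0$ identifies $\Hom_{R}(\mathfrak{a},\omega_{R})\cong S/J$ and exhibits $\mathfrak{a}\cong\omega_{S/J}$ as a maximal Cohen--Macaulay $R$-module, so that $S/J\cong\Hom_{R}(\mathfrak{a},\omega_{R})$, being the $\omega_{R}$-dual of a maximal Cohen--Macaulay module, is Cohen--Macaulay; by symmetry this proves part (1). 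For part (2), assuming $S/I$ (hence also $S/J$, by part (1)) Cohen--Macaulay, the modules $\mathfrak{a}\cong\omega_{S/J}$ and $\mathfrak{b}\cong\omega_{S/I}$ are maximal Cohen--Macaulay, and so is $\mathfrak{a}+\mathfrak{b}\cong\mathfrak{a}\oplus\mathfrak{b}$ (the sum being direct as $\mathfrak{a}\cap\mathfrak{b}=0$); dualizing $0\to\mathfrak{a}+\mathfrak{b}\to R\to S/(I+J)\to0$ and using the reflexivity of maximal Cohen--Macaulay modules ($\Hom_{R}(\omega_{S/I},\omega_{R})\cong S/I$, and likewise for $S/J$) collapses the long exact sequence to $0\to\omega_{R}\to S/I\oplus S/J\to\Ext^{1}_{R}(S/(I+J),\omega_{R})\to0$, with $\Ext^{i}_{R}(S/(I+J),\omega_{R})=0$ for $i\ne1$. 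Therefore $S/(I+J)$ is Cohen--Macaulay of dimension $d-1$, so $\hgt(I+J)=h+1=\hgt(I)+1$; and $\Ext^{1}_{R}(S/(I+J),\omega_{R})$, which by local duality is the canonical module $\omega_{S/(I+J)}$, is by the above the cokernel of the Mayer--Vietoris inclusion $R\hookrightarrow S/I\oplus S/J$, namely $S/(I+J)$ itself. Since a Cohen--Macaulay ring whose canonical module is cyclic is Gorenstein, $S/(I+J)$ is Gorenstein.

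The step I expect to be the main obstacle is the pair of colon equalities $(J:I)=J$ and $(I:J)=I$ (equivalently, $\mathfrak{b}=(0:_{R}\mathfrak{a})$ and $\mathfrak{a}=(0:_{R}\mathfrak{b})$): everything downstream is bookkeeping with the long exact sequence, plus the standard fact that a Cohen--Macaulay ring is Gorenstein precisely when its canonical module is cyclic --- but these colon equalities are exactly where the hypothesis on associated primes enters, and making them rigorous forces one to use that $I$ and $J$ are unmixed of codimension $h$. This is harmless in the applications of the present paper, where the relevant ideals are prime.
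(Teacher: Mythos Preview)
The paper does not supply a proof of this proposition; it is quoted from \cite[Remark~1.4]{PS} and used as a black box in the proof of Proposition~\ref{primaryDec}. Your linkage-theoretic argument via the Mayer--Vietoris sequence and duality over the Gorenstein ring $R=S/(I\cap J)$ is essentially the classical one due to Peskine and Szpiro, so in that sense you have reconstructed what the cited reference does rather than offered an alternative.

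Your identification of the colon identities $(I\cap J):I=J$ and $(I\cap J):J=I$ as the crux is exactly right, and your caution is more than warranted: the proposition \emph{as stated in the paper} is false without an unmixedness hypothesis. In $S=\mathbb{K}[x,y]$ take $I=(y)$ and $J=(x^{2},xy)$. Then $I\cap J=(xy)$, so $S/(I\cap J)$ is a hypersurface ring, hence Gorenstein; one has $\Ass(S/I)=\{(y)\}$ and $\Ass(S/J)=\{(x),(x,y)\}$, which are disjoint; and $S/I\cong\mathbb{K}[x]$ is Cohen--Macaulay while $S/J$ has depth~$0$ and dimension~$1$. What fails is precisely your colon equality: $(I\cap J):I=(xy):(y)=(x)\neq J$. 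In Peskine--Szpiro's original formulation the ideals are assumed equidimensional without embedded components (this is part of their definition of liaison), and under that extra hypothesis your argument for $(0:_{R}\mathfrak{a})=\mathfrak{b}$ goes through: any $P\in\Ass(S/J)$ containing $I$ would then have height~$h$ and contain a minimal prime of $I$ of the same height, forcing equality and contradicting the disjointness of associated primes. As you observe, every application in the present paper has $I$ and $J$ prime, so the missing hypothesis is automatically satisfied and your proof is complete in those cases.
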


Collecting together all these results, we can prove Proposition \ref{primaryDec}.

\begin{proof}[Proof of Proposition \ref{primaryDec}]
For $k \geq 1$ we define:
\begin{itemize}
\item[-] $I_k:=I_{m-h}(X)$ and $J_k:=I_{m-h-1}(Q)$, if $k=2h+1$
\item[-] $I_k:=I_{m-h}(P_1)$ and $J_k:=I_{m-h}(P_2)$, if $k=2h$.
\end{itemize}

 We want to show that $I_k+J_k=I_{k+1} \cap J_{k+1}$ for every $1\leq k \leq 2(m-2)+1$. We proceed by induction on $k$, following the usual scheme.\par
First of all observe that all these ideals are different homogenous prime ideals of height $k$ in $S$ (in particular, they have no associated prime ideals in common) and since they are determinantal ideals, they are also Cohen-Macaulay.\par
Assume $k=1$. Then $I_1+J_1=I_m(X)+I_{m-1}(Q)=(\det X, \det Q)$ and $I_2 \cap J_2 =I_{m-1}(P_1) \cap I_{m-1}(P_2)$. We know that $I_1+J_1 \subseteq I_2 \cap J_2$ and that $I_1+J_1$ is a complete intersection of height 2. In particular it is Gorenstein and by the purity of Macaualy, all its associated primes $P$ have the same height, namely $\hgt(P)=\hgt(I_1+J_1)=2$. Moreover $\hgt(I_2 \cap J_2)=2=\hgt(I_1+J_1)$. Hence $I_1+J_1$ and $I_2\cap J_2$ satisfy all the hypothesies of Lemma \ref{lemmino}. If we show that they have the same multiplicity, we get the desired equality.\par
Since $I_1+J_1$ is a complete intersection, we have that $e(I_1+J_1)=m(m-1)$. Moreover the $h$-vector of the determinantal ring of a Hankel matrix $H$ of size $t \times s$ is well known. In fact, being $\hgt (I_t(H))=n-2t+2$ and using Remark \ref{osshank}, the Eagon-Northcott complex provides a minimal free resolution of $S/I_t(H)$. In particular $S/I_t(H)$ is Cohen-Macaulay and has linear resolution. Therefore:

\begin{equation} \label{hvec}
h^{S/I_t(H)}=\left(1,(s-t+1), \binom{s-t+2}{2},\cdots, \binom{s-1}{t-1}\right)
\end{equation}

and its multiplicity is 

\begin{equation} \label{mul}
e(S/I_t(H))=1+(s-t+1)+ \binom{s-t+2}{2}+ \cdots+ \binom{s-1}{t-1}.
\end{equation}

Using this formula we get:

\begin{align*}
e(I_2 \cap J_2)&= e(I_{m-1}(P_1)) +e( I_{m-1}(P_2))=2 e(I_{m-1}(P_1))\\
&= 2\left( 1+(m-m+1+1)+ \binom{3}{2}+\binom{4}{3}+ \cdots+ \binom{m-1}{m-2}\right)\\
&=2(1+2+3+4+\cdots+(m-1))\\
&=2\binom{m}{2}=m(m-1).
\end{align*}

Hence $e(I_1+J_1)=e(I_2 \cap J_2)$ and by Lemma \ref{lemmino} we get $I_1+J_1=I_2 \cap J_2$. Furthermore, using Lemma, \ref{PS} we get that $I_2+J_2$ is Gorestein and $\hgt (I_2+J_2)=\hgt(I_{m-1}(P_1))+1=3$.\par
Now assume $k=2$. Then $I_2+J_2=I_{m-1}(P_1) + I_{m-1}(P_2)$ and $I_3 \cap J_3=I_{m-1}(X)\cap I_{m-2}(Q)$. From the previous case, we know that $I_2+J_2$ is Gorenstein and it has height $3$. As a consequence of the purity theorem of Macaulay we have that $\hgt(P)=\hgt(I_2+J_2)$ for all the associated primes $P$ of $I_2+J_2$. In addition we know that $I_2+J_2 \subseteq I_3 \cap J_3$ and that they have the same height. Again $I_2+J_2$ and $I_3 \cap J_3$ satisfy all the hypothesies of Lemma \ref{lemmino}. If we show that they have the same multiplicity, we get the desired equality. \par 
Iterating this procedure, we get the thesis. More generally, let $k\geq 2$. By induction we may assume that $I_k \cap J_k=I_{k-1} +J_{k-1}$ is Gorenstein and that $\hgt(I_k+J_k)=k+1=\hgt (I_{k+1} \cap J_{k+1})$. Since  $I_k + J_k\subseteq I_{k+1} \cap J_{k+1}$, if we show that $e(I_k + J_k)= e( I_{k+1} \cap J_{k+1})$, by lemma \ref{lemmino} we get $I_k + J_k=I_{k+1} \cap J_{k+1}$ and using Lemma \ref{PS}, we obtain that $I_{k+1} + J_{k+1}$ is Gorenstein of height $(k+1)+1$. \par
Therefore it is enough to show that $e(I_k + J_k)= e( I_{k+1} \cap J_{k+1})$ for every $k$. In other words, we need to prove the following equalities:

\begin{enumerate}
\item[•]$e(I_{t}(P_1)+I_{t}(P_2))=e(I_{t}(X)\cap I_{t-1}(Q))$
\item[•]$e(I_t(X)+ I_{t-1}(Q))=e(I_{t-1}(P_1)\cap I_{t-1}(P_2)).$
\end{enumerate}

To compute the multiplicity of these ideals, we first compute their $h$-vectors. Let $I:=I_t(P_1)$ and $J:=I_t(P_2)$ and consider the following exact sequence:
$$0 \longrightarrow S/(I \cap J) \longrightarrow S/I \oplus S/J \longrightarrow S/(I+J)\longrightarrow 0.$$
By additivity of Hilbert series on short exact sequence, we get:

\begin{equation}\label{hs}
HS_{S/(I +J)}(t)=HS_{S/I \oplus S/J}(t)-HS_{S/(I \cap J)}(t).
\end{equation}

From the previous discussion we already know that $$\hgt (I_t(P_1)+I_t(P_2))=h+1$$ where $h:=\hgt (I_t(P_1))=\hgt (I_t(P_2))=\hgt (I_t(P_1)\cap I_t(P_2))$.\par
This implies that 
$$\dim S/(I_t(P_1)\cap I_t(P_2))=\dim S/I_t(P_1)= \dim I_t(P_2)=n-h=:d$$
and
$$\dim S/(I_t(P_1)+ I_t(P_2))=d-1.$$

Using the well known fact that the Hilbert series is a rational function (see e.g. [BH, Corollary 4.1.8]), we get

\begin{equation} \label{hs2}
\frac{h^{S/(I+J)}(z)}{(1-z)^{d-1}}=\frac{h^{S/I \oplus S/J}(z)-h^{S/(I \cap J)}(z)}{(1-z)^d},
\end{equation}

 hence
 
$$h^{S/(I+J)}(z)=\frac{h^{S/I \oplus S/J}(z)-h^{S/(I \cap J)}(z)}{(1-z)}.$$

It is straightforward to see that $S/I$ and $S/J$ have the same $h$-vector, namely:
$$h^{S/I}=h^{S/J}=(h_{0}^{S/I},h_{1}^{S/I}, \ldots, h_{t-1}^{S/I})$$
where $h_i^{S/I}=\binom{2m-2t+i-1}{i}$ for $ i \leq t-1$. As a consequence, we have that $e(I\cap J)=e(I)+e(J)=2e(I)$.\par
Let $\overline{S}/\overline{I\cap J}$ be the Artinian reduction of $S/(I\cap J)$. Since $I$ and $J$ are generated in degree $t$, for $i<t$ we have  $$h_i^{S/(I\cap J)}=h_i^{\overline{S}/\overline{I\cap J}}=\dim \overline{S}_i=\binom{2m-2t+i-1}{i}=h_i^{S/I}.$$Thus $h_i^{S/(I\cap J)}=h_i^{S/I}$ for $i<t$. But $I\cap J$ is a Gorenstein ideal, so its $h$-vector must be symmetric and we already know that $e(I\cap J)=2e(I)$. This implies that 

$$h^{S/I \cap J}=(h_{0}^{S/I},h_{1}^{S/I}, \ldots, h_{t-2}^{S/I}, h_{t-1}^{S/I},h_{t-1}^{S/I},h_{t-2}^{S/I}, \ldots, h_{1}^{S/I}, h_{0}^{S/I}).$$

Substituting in (\ref{hs2}), we get:

\begin{equation*}
\begin{split}
h^{S/(I+J)}(z)&=\frac{2h^{S/I}(z)-h^{S/(I \cap J)}(z)}{(1-z)}=\\
&=\frac{2\sum \limits_{i=0}^{t-1} h_i^{S/I} z^i- \sum \limits_{i=0}^{t-1} h_i^{S/I} z^i-\sum \limits_{i=t}^{2t-1} h_{2t-1-i}^{S/I} z^i}{1-z}=\\
&= \frac{h_{0}^{S/I}+h_{1}^{S/I}z+ \cdots+ h_{t-1}^{S/I}z^{t-1}-h_{t-1}^{S/I}z^{t}- \cdots -h_{1}^{S/I}z^{2t-2}-h_{0}^{S/I}z^{2t-1}}{1-z}.
\end{split}
\end{equation*}

Dividing by $1-z$, we finally obtain 

\begin{equation*}
h^{S/(I+J)}=\left( h_{0}^{S/I},h_{0}^{S/I}+h_{1}^{S/I}, \ldots,\sum \limits_{i=0}^{t-2} h_i^{S/I},\sum \limits_{i=0}^{t-1} h_i^{S/I},\sum \limits_{i=0}^{t-2} h_i^{S/I}, \ldots, h_{0}^{S/I}+h_{1}^{S/I},h_{0}^{S/I} \right).
\end{equation*}

Note that a similar argument shows that if we consider $I=I_t(X)$ and $J=I_{t-1}(Q)$, then
$$h^{S/(I+J)}=\left( h_{0}^{S/I},h_{0}^{S/I}+h_{1}^{S/I}, \ldots,\sum \limits_{i=0}^{t-2} h_i^{S/I},\sum \limits_{i=0}^{t-2} h_i^{S/I}, \ldots, h_{0}^{S/I}+h_{1}^{S/I},h_{0}^{S/I} \right).$$

Now we can compute the multiplicities.\par
In fact, using the relation $\binom {j}{k}
+\binom{j}{k+1}=\binom{j+1}{k+1}$ and the identity (\ref{hvec}), we get the relation

\begin{align*}
h_{i}^{S/I_{t}(P_1)}=h_{i}^{S/I_{t}(P_2)}=h_{i}^{S/I_{t}(X)}-h_{i-1}^{S/I_{t}(X)}.
\end{align*}

From the $h$-vector of $\faktor{S}{(I+J)}$, using the fact that $h_{i}^{S/I_{t}(X)}=h_{i}^{S/I_{t-1}(Q)}$, we have: 

\begin{align*}
e(I_{t}(P_1)+I_{t}(P_2))= & h_{0}^{S/I_{t}(P_1)}+\left( h_{0}^{S/I_{t}(P_1)}+h_{1}^{S/I_{t}(P_1)}\right)+ \cdots\\
\cdots&+\sum \limits_{i=0}^{t-2} h_i^{S/I_{t}(P_1)}+\sum \limits_{i=0}^{t-1} h_i^{S/I_{t}(P_1)}+\sum \limits_{i=0}^{t-2} h_i^{S/I_{t-1}(P_1)}+ \cdots\\
\cdots & +\left( h_{0}^{S/I_{t}(P_1)}+h_{1}^{S/I_{t}(P_1)}\right)+h_{0}^{S/I_{t}(P_1)}=\\
=&\underbrace{h_{0}^{S/I_{t}(X)}+h_{1}^{S/I_{t}(X)}+\cdots+h_{t-2}^{S/I_{t}(X)}+h_{t-1}^{S/I_{t}(X)}}_{=e(I_{t}(X))}+\\
&+ \underbrace{h_{t-2}^{S/I_{t}(X)}+\cdots+h_{1}^{S/I_{t}(X)}+h_{0}^{S/I_{t}(X)}}_{=e(I_{t-1}(Q))}=\\
=&e(I_{t}(X))+e(I_{t-1}(Q))=e(I_{t}(X)\cap I_{t-1}(Q)).
\end{align*}

So the first equality has been proved.\par
For the second equality, one can argue in a similar way observing that
$$h_{i}^{S/I_{t}(X)}=h_{i}^{S/I_{t-1}(P_1)}-h_{i-1}^{S/I_{t-1}(P_1)}.$$
Computing the multiplicity of $I_{t}(X)+ I_{t-1}(Q)$ from its $h$-vector, we get

\begin{align*}
e(I_{t}(X)+ I_{t-1}(Q))=& h_{0}^{S/I_{t}(X)}+\left(h_{1}^{S/I_{t}(X)}+h_{0}^{S/I_{t}(X)}\right)+\cdots+\sum \limits_{i=0}^{t-2} h_i^{S/I_{t}(X)}+\\
+&\sum \limits_{i=0}^{t-2} h_i^{S/I_{t}(X)}+ \cdots +\left( h_{0}^{S/I_{t}(X)}+h_{1}^{S/I_{t}(X)}\right)+h_{0}^{S/I_{t}(X)}=\\
=&2\left(h_{0}^{S/I_{t-1}(P_1)}+h_{1}^{S/I_{t-1}(P_1)}+\cdots+h_{t-2}^{S/I_{t-1}(P_1)}\right)=\\
=&2\left(e(I_{t-1}(P_1)\right)=e(I_{t-1}(P_1) \cap I_{t-1}(P_2)).
\end{align*}
\end{proof}

\begin{oss}
It is worth noticing that, while proving Proposition \ref{primaryDec}, we also found out that
the ideals $I_t(X)+ I_{t-1}(Q)=I_{t-1}(P_1) \cap I_{t-1}(P_2)$ and $I_{t-1}(P_1)+I_{t-1}(P_2)=I_{t-1}(X) \cap I_{t-2}(Q)$ are Gorenstein ideals for every $t$.\par
Furthermore, we have computed the following $h$-vectors:\\

\emph{(a)} Let $I=I_t(P_1)$ and $J=I_t(P_2)$. Then:
  $$h^{S/(I+J)}=\left( h_{0}^{S/I},h_{0}^{S/I}+h_{1}^{S/I}, \ldots,\sum \limits_{i=0}^{t-2} h_i^{S/I},\sum \limits_{i=0}^{t-1} h_i^{S/I},\sum \limits_{i=0}^{t-2} h_i^{S/I}, \ldots, h_{0}^{S/I}+h_{1}^{S/I},h_{0}^{S/I} \right).$$
  
\emph{(b)} Let $I=I_t(X)$ and $J=I_{t-1}(Q)$. Then:
$$h^{S/(I+J)}=\left( h_{0}^{S/I},h_{0}^{S/I}+h_{1}^{S/I}, \ldots,\sum \limits_{i=0}^{t-2} h_i^{S/I},\sum \limits_{i=0}^{t-2} h_i^{S/I}, \ldots, h_{0}^{S/I}+h_{1}^{S/I},h_{0}^{S/I} \right).$$
Note that these $h$-vectors are unimodal, as $h_i^{S/I}$ is non-negative for every $i$. It should be stressed that this is expected by the $g$-conjecture since we have proved that $I+J$ is always Gorenstein.\\
\end{oss}

\begin{oss}
We have shown that if $f= \det X \det Q$ then $S/I$ is Cohen-Macaulay for every ideal $I \in \mathcal{C}_f$. We want to point out that this fact is proper of this specific choice of $f$: if we consider for example $f=x_1 \cdots x_n$ then $\mathcal{C}_f$ is the family of all the squarefree monomial ideals of $S$ and most of them are not Cohen-Macaulay.

\end{oss}

 \end{document}